\definecolor{lightorange}{rgb}{0.91, 0.84, 0.42}
\definecolor{deepcarrotorange}{rgb}{0.91, 0.41, 0.17}
\definecolor{deepsaffron}{rgb}{1.0, 0.6, 0.2}
\definecolor{gold}{rgb}{0.91, 0.84, 0.17}
\definecolor{red}{rgb}{0.8,0,0}
\definecolor{darkorange}{rgb}{1,0.4,0}
\definecolor{lightorange}{rgb}{1,0.6, 0}
\definecolor{yellow}{rgb}{1,0.8, 0}
\def\RR{\mathbb{R}}
\def\mos{{\rm mos}\,}
\def\calT{{\cal T}}
\def\WF{{\rm WF}}
\newtheorem{theorem}{Theorem}
\newtheorem{lemma}{Lemma}
\newtheorem{definition}{Definition}
\newtheorem{corollary}{Corollary}
\newenvironment{proof}{\begin{trivlist}\item[]{\emph{Proof.}}}
               {\hfill$\Box$\end{trivlist}}
\begin{document}

\title{A characterization of supersmoothness \\
        of multivariate splines}
\author{Michael S. Floater\footnote{
   Department of Mathematics,
   University of Oslo, Moltke Moes vei 35, 0851 Oslo, Norway,
   {\it email: michaelf@math.uio.no}}
\and
Kaibo Hu\footnote{School of Mathematics, University of Minnesota, 
  206 Church St. SE, 
Minneapolis, MN 55455-0488, USA.
   {\it email: khu@umn.edu} }\thanks{KH was supported in part by the
European Research Council under the European Union's Seventh
Framework Programme (FP7/2007-2013) / ERC grant agreement 339643,
during his affiliation with the University of Oslo.}
}

\maketitle

\abstract{
We consider spline functions over simplicial meshes in $\RR^n$.
We assume that the spline pieces
join together with some finite order of smoothness
but the pieces themselves are infinitely smooth.
Such splines can have extra orders of smoothness at a vertex,
a property known as \emph{supersmoothness}, which plays a
role in the construction of multivariate splines
and in the finite element method.
In this paper we characterize supersmoothness
in terms of the degeneracy of spaces of polynomial splines
over the cell of simplices sharing the vertex,
and use it to determine the maximal order of supersmoothness of
various cell configurations.
}

\smallskip

\noindent {\em Math Subject Classification: }
Primary: 41A15, 65D07, 
Secondary: 41A58, 65N30 

\smallskip

\noindent {\em Keywords: supersmoothness, spline,
           finite element, macroelement.}

\smallskip

\section{Introduction}
Polynomial splines over a simplicial partition of a domain in $\RR^n$
(a triangular mesh in 2D, a tetrahedral mesh in 3D, and so on)
are functions whose pieces are polynomials up to a certain degree $d$
and which join together with
some order of continuity~$r$.
Such spline functions may have extra orders of smoothness at
a vertex of the mesh, a property known as {\it supersmoothness}
as suggested by Sorokina~\cite{sorokina2010intrinsic}.
For example, the Clough-Tocher macroelement,
which is $C^1$ piecewise cubic,
is twice differentiable at the refinement point, as first observed
by Farin \cite{farin1980},
and so this element can be said to have supersmoothness of order~2
at that point.

For the construction of splines or finite elements with higher orders
of continuity, it is important to recognize and
make use of supersmoothness.
For example, it plays a role in many of the
macroelement constructions surveyed by Lai and Schumaker\cite{lai2007spline},
where applications of splines to
approximation theory and computer-aided geometric design are discussed.
The concept of supersmoothness is also relevant to
the finite element method.
Motivated by structure-preserving or compatible discretizations
there has recently been an increased interest in investigating the use of
splines for vector fields and differential complexes~\cite{alfeld2016linear,
arnold2006finite,christiansen2018generalized,fu2018exact}.
The de~Rham complex reveals a connection between smooth,
e.g., $C^1$, finite elements and the Stokes problem in fluid mechanics.
In a discrete de~Rham complex, the spline spaces for the velocity field
may inherit the supersmoothness of the scalar field,
\cite{alfeld2016linear,christiansen2018generalized,
fu2018exact,schumaker1989super}.
Thus, supersmoothness is also of importance
in the study of these problems. 

Since Farin's observation about the Clough-Tocher element,
Sorokina, in \cite{sorokina2010intrinsic} and
\cite{sorokina2014redundancy} has derived further
supersmoothness properties of polynomial splines,
and in particular higher order supersmoothness in a cell in 2D;
see equation (\ref{eq:rho2D}).
More recently, Shekhtman and Sorokina~\cite{shekhtman2015intrinsic}
observed that supersmoothness is a phenomenon of
more general spline functions, not only piecewise polynomials.
Their results imply that at the vertex $v$ of a triangulation
with $m$ incoming edges all having different slopes, any
$C^r$ spline with $r \ge m-2$ has derivatives of order $r+1$ at $v$
as long as the spline pieces themselves have
$C^{r+1}$ continuity in a neighbourhood of $v$.

The results of~\cite{shekhtman2015intrinsic} were the motivation
for this paper. If we simplify the framework of~\cite{shekhtman2015intrinsic}
and assume that all the spline pieces are
$C^\infty$ smooth, which is the case for polynomials and
many other functions of interest,
can we extend the
results to higher orders of supersmoothness and also to
higher Euclidean space dimensions?
Our solution is to simplify the problem
by deriving a characterization of
supersmoothness in terms of the degeneracy of
polynomial spline spaces over the cell (in Theorem~\ref{thm:charac}).
Using this, the maximal order of supersmoothness at a vertex
can be determined once a general formula
for the dimensions of the polynomial spline spaces over the cell is known.
At the end of the paper we apply these results to various cell configurations.

\section{Cells and supersmoothness}\label{sec:ss}
We start with some definitions.
\subsection{Simplicial meshes and cells}
Let $\Delta$ be a set of $n$-simplices in $\RR^n$.
We call $\Delta$ a mesh if the intersection between
any two n-simplices $T_1,T_2 \in \Delta$
is either empty or a common $k$-face for some $k$, $0 \le k \le n-1$.
We let $\Omega = \cup \{T : T \in \Delta \}$.

If $v$ is a vertex in the mesh,
we denote by $\Delta_v \subset \Delta$ the n-simplices
in $\Delta$ that share $v$, and we call $\Delta_v$ a cell.
Let $\Omega_v = \cup \{T : T \in \Delta_v \}$.
We will say that $v$ is an interior vertex of $\Delta$ if
$v$ is in the interior of $\Omega_v$,
in which case we will say that $\Delta_v$ is an interior cell.

In 2D an interior cell $\Delta_v$ is a sequence
of triangles $\Delta_v = \{T_1,T_2,\ldots,T_m\}$, $m \ge 3$,
that form a star-shaped polygon $\Omega$,
as in Figure~\ref{fig:cells2D}.
\begin{figure}[ht]
\centering
\includegraphics[height=0.17\textheight]{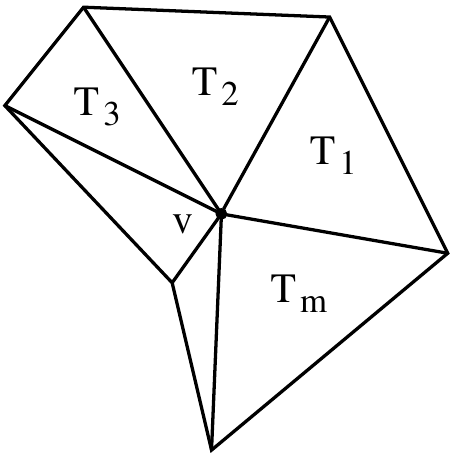}
\qquad \qquad
\includegraphics[height=0.14\textheight]{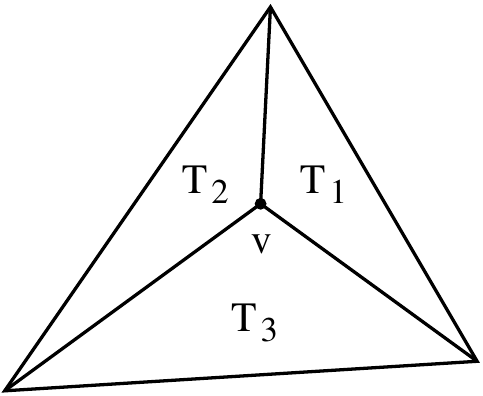}
\caption{Cells in 2D.}
\label{fig:cells2D}
\end{figure}
In the special case that $m=3$, $\Delta_v$
is known as a Clough-Tocher split since it can also be constructed
by refinement.
We could start with any triangle $T$ in the plane
(the outer triangle in the figure), then let $v$
be any point inside $T$ and connect the three edges of $T$ to $v$,
thus creating three sub-triangles of $T$.

In 3D a cell is a collection of tetrahedra.
A simple example of an interior cell $\Delta_v$ in 3D
is the Alfeld split, constructed by
choosing a tetrahedron $T$, then any point $v$ inside $T$ and
connecting~$v$ to the four triangular faces of $T$.
The resulting cell has four tetrahedra,
as in Figure~\ref{fig:Alfeld}.
\begin{figure}[ht]
\centering
\includegraphics[height=0.17\textheight]{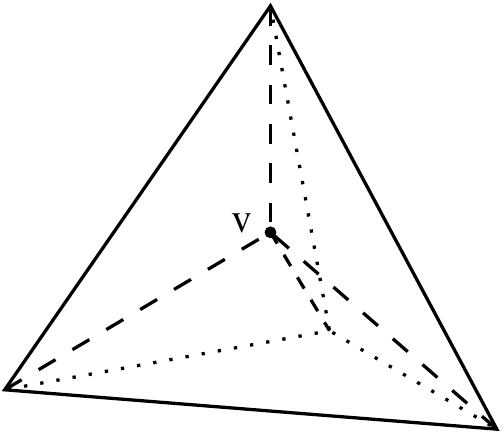}
\caption{Alfeld split in 3D.}
\label{fig:Alfeld}
\end{figure}

\subsection{Splines}
In this paper in order to have a notion of supersmoothness
of various orders we need to view a spline as a set of
pairs of open neighbourhoods and smooth functions,
one pair for each n-simplex in the mesh $\Delta$.
Thus a spline $\sigma$ has the form
$$ \sigma = \{(U_T,f_T) : T \subset U_T \subset \RR^n, \,
      \hbox{$U_T$ open}, \, f_T \in C^\infty(U_T),
                  \, T \in \Delta \}, $$
and we denote by $S(\Delta)$ the set of all such splines.
The pieces $f_T$ could, for example, be
polynomials of any degree (in which case we can take $U_T = \RR^n$),
trigonometric functions, rational functions, and so on.

Next we consider how the pieces of $\sigma$ might fit together.
Let $f \in C^\infty(U)$ for some open set $U \subset \RR^n$
and let $x = (x_1,x_2,\ldots,x_n) \in U$.
Let $\alpha = (\alpha_1,\ldots,\alpha_n)$ be a multi-index, with
$\alpha_1, \ldots, \alpha_n \ge 0$.
Then we denote by
$$ D^\alpha f(x) = 
     \left(\frac{\partial}{\partial x_1}\right)^{\alpha_1} \cdots
     \left(\frac{\partial}{\partial x_n}\right)^{\alpha_n} f(x), $$
the corresponding partial derivative of $f$ at $x$, of order
$|\alpha| = \alpha_1 + \cdots + \alpha_n$.
We will say that a spline $\sigma \in S(\Delta)$ has
smoothness $r \ge 0$ if
$$ D^\alpha f_{T_1}|_F = D^\alpha f_{T_2}|_F, \quad
                 |\alpha| \le r, \quad
              T_1, T_2 \in \Delta, \quad
              T_1 \cap T_2 = F,
     \quad \hbox{$F$ is an $(n-1)$-face}. $$
We will denote by $S^r(\Delta)$ the set of all such splines.

\subsection{Smoothness at a vertex}
Suppose that $v$ is a vertex of $\Delta$ and let $\sigma \in S(\Delta_v)$.
We will say that $\sigma$ has smoothness of order $\rho$ at $v$ if
$$ D^\alpha f_{T_1}(v) = D^\alpha f_{T_2}(v), \quad
                 |\alpha| \le \rho, \quad
              T_1,T_2 \in \Delta_v. $$
We will denote by $C^\rho(\Delta_v)$ the set of all such splines.

\subsection{Supersmoothness}
Now we look at enhanced smoothness of splines at an interior vertex $v$
of $\Delta$. We will say that a spline $\sigma\in S^r(\Delta_v)$ has
\emph{supersmoothness of order $\rho \ge r$ at $v$} if
$\sigma \in C^\rho(\Delta_v)$.
Thus we are interested in the question of whether
$S^r(\Delta_v) \subset C^\rho(\Delta_v)$ for some
$\rho > r$.
This will depend on the geometric configuration of the $n$-simplices
of $\Delta_v$.

\section{Taylor approximations}\label{sec:Taylor}
Our aim is to characterize supersmoothness in terms of the
degeneracy of polynomial splines.
The first step in the derivation is to study Taylor approximations.
Let $f \in C^\infty(B)$ for some domain $B \subset \RR^n$.
With respect to a point $v=(v_1,\ldots,v_n) \in B$, we denote
the Taylor approximation of $f$ of order $\rho \ge 0$
by
$$ \calT_{v,\rho} f(x) = \sum_{|\alpha| \le \rho}
     \frac{D^\alpha f(v)}{\alpha!} 
             (x_1-v_1)^{\alpha_1} \cdots (x_n-v_n)^{\alpha_n},
    \qquad x \in B, $$
where $\alpha! = \alpha_1! \cdots \alpha_n!$.
We will make use of the following property of these Taylor approximations.
\begin{lemma}\label{lem:Taylor1}
Let $v,w$ be distinct points in $\RR^n$ and
let $e = [v,w]$ be the line segment connecting them.
Let $B \subset \RR^n$ be some domain containing $e$.
Suppose that $f, g \in C^\infty(B)$ and that
$f|_e = g|_e$.
Then, for any $\rho \ge 0$,
$\calT_{v,\rho}f|_e = \calT_{v,\rho}g|_e$.
\end{lemma}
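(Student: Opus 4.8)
The plan is to reduce the statement to the one-dimensional fact that the Taylor polynomial of a smooth function of one variable depends only on the restriction of that function to a neighbourhood of the expansion point. First I would parametrize the segment $e = [v,w]$ by $\gamma(t) = v + t(w-v)$ for $t \in [0,1]$, and introduce the univariate functions $\phi(t) = f(\gamma(t))$ and $\psi(t) = g(\gamma(t))$, both of which are $C^\infty$ on an open interval containing $[0,1]$ since $f,g \in C^\infty(B)$ and $B$ contains $e$. The hypothesis $f|_e = g|_e$ says precisely that $\phi \equiv \psi$ on $[0,1]$, hence all derivatives agree: $\phi^{(k)}(0) = \psi^{(k)}(0)$ for every $k \ge 0$.

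The key step is to express $\calT_{v,\rho}f$ restricted to $e$ in terms of the data $\phi^{(k)}(0)$. Evaluating the Taylor polynomial along the segment gives
$$ \calT_{v,\rho}f(\gamma(t)) = \sum_{|\alpha| \le \rho}
     \frac{D^\alpha f(v)}{\alpha!} \, t^{|\alpha|}
         (w_1-v_1)^{\alpha_1} \cdots (w_n-v_n)^{\alpha_n}. $$
Grouping the terms by the value $k = |\alpha|$ and using the multinomial theorem, the coefficient of $t^k$ is exactly $\frac{1}{k!}$ times the $k$-th directional derivative of $f$ at $v$ in the direction $w-v$, which by the chain rule equals $\frac{1}{k!}\phi^{(k)}(0)$. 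Thus $\calT_{v,\rho}f(\gamma(t)) = \sum_{k=0}^{\rho} \frac{\phi^{(k)}(0)}{k!} t^k$, i.e. $\calT_{v,\rho}f|_e$ is nothing but (the pullback of) the degree-$\rho$ univariate Taylor polynomial of $\phi$ at $0$. The same computation applies verbatim to $g$ and $\psi$.

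Since $\phi^{(k)}(0) = \psi^{(k)}(0)$ for $0 \le k \le \rho$, the two univariate Taylor polynomials coincide, and therefore $\calT_{v,\rho}f(\gamma(t)) = \calT_{v,\rho}g(\gamma(t))$ for all $t$, which is the claim $\calT_{v,\rho}f|_e = \calT_{v,\rho}g|_e$. I expect the only mildly delicate point to be the bookkeeping in the second step — verifying via the multinomial theorem and the chain rule that the coefficient of $t^k$ really is $\phi^{(k)}(0)/k!$ — but this is a routine identification of directional derivatives and carries no genuine obstacle. Everything else is immediate from the chain rule and the fact that equality of smooth univariate functions on an interval forces equality of all derivatives at an interior point.
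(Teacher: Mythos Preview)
Your proposal is correct and follows essentially the same route as the paper: parametrize $e$ by $t\mapsto v+t(w-v)$, expand $\calT_{v,\rho}f$ along this line, group by homogeneous degree, and recognize via the multinomial identity that the coefficient of $t^k$ is $\frac{1}{k!}$ times the $k$-th derivative at $0$ of the pullback $h(\tau)=f(v+\tau(w-v))$; since $f|_e=g|_e$ forces these pullbacks to coincide on $[0,1]$, their derivatives at $0$ match and the conclusion follows. The only cosmetic slip is the phrase ``at an interior point'' --- the base point $0$ is an endpoint of $[0,1]$ --- but this is harmless since your $\phi,\psi$ are smooth on an open interval containing $[0,1]$, so equality of all derivatives at $0$ follows by continuity from equality on $(0,1)$.
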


\begin{proof}
We can represent the line segment $e$ parametrically as
$$ e = \{v + t u : 0 \le t \le 1\}, $$
where $u = w - v$.
Letting $x = v + tu$ for some $t \in [0,1]$, we find that
\begin{align*}
 \calT_{v,\rho}f(x)
  = \calT_{v,\rho} f(v+tu)
  & = \sum_{|\alpha| \le \rho}
     \frac{D^\alpha f(v)}{\alpha!} 
             (tu_1)^{\alpha_1} \cdots (tu_n)^{\alpha_n} \cr
  & = \sum_{i=0}^\rho \frac{t^i}{i!} 
      \sum_{|\alpha| = i}
     \frac{i!}{\alpha!} 
        D^\alpha f(v)
             u_1^{\alpha_1} \cdots u_n^{\alpha_n}
  = \sum_{i=0}^\rho \frac{t^i}{i!} 
      h^{(i)}(0),
\end{align*}
where
$$ h(\tau) = f(v + \tau u), \quad \tau \in [0,1]. $$
Since $f$ and $g$ are equal on $e$, we also have
$$ h(\tau) = g(v + \tau u), \quad \tau \in [0,1], $$
and so
$$ \calT_{v,\rho}g(x) = \sum_{i=0}^\rho \frac{t^i}{i!} 
      h^{(i)}(0) = \calT_{v,\rho}f(x). $$
\end{proof}

We want to generalize this property to derivatives of $f$ and $g$.
To do this we first show
\begin{lemma}\label{lem:Taylor2}
Let $v \in \RR^n$ and suppose
$f \in C^\infty(B)$ for some domain $B \subset \RR^n$ containing~$v$.
Then, for any integer $\rho \ge 0$ and any
multi-index $\beta$ with $|\beta| \le \rho$,
$$ D^\beta \calT_{v,\rho} f
  = \calT_{v,\rho - |\beta|} D^\beta f. $$
\end{lemma}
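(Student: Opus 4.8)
The plan is to verify the identity by direct computation, differentiating the explicit polynomial $\calT_{v,\rho}f$ term by term and matching the result against the definition of $\calT_{v,\rho-|\beta|}D^\beta f$. It suffices to treat the case $\beta = \mathbf{e}_k$, a single unit multi-index, since the general case follows by iterating and since $D^{\mathbf e_j}D^{\mathbf e_k}=D^{\mathbf e_k}D^{\mathbf e_j}$ on $C^\infty$ functions (and $\rho-|\beta|$ decreases by one at each step, so the bookkeeping composes correctly). So first I would reduce to showing $\frac{\partial}{\partial x_k}\calT_{v,\rho}f = \calT_{v,\rho-1}\bigl(\tfrac{\partial f}{\partial x_k}\bigr)$.

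For that reduced statement, write $\calT_{v,\rho}f(x)=\sum_{|\alpha|\le\rho}\frac{D^\alpha f(v)}{\alpha!}(x-v)^\alpha$ in the abbreviated notation $(x-v)^\alpha=\prod_j(x_j-v_j)^{\alpha_j}$. Applying $\partial/\partial x_k$ kills every term with $\alpha_k=0$ and sends a term with $\alpha_k\ge 1$ to $\frac{D^\alpha f(v)}{\alpha!}\,\alpha_k\,(x-v)^{\alpha-\mathbf e_k}$. Now reindex by $\gamma=\alpha-\mathbf e_k$, so $\alpha=\gamma+\mathbf e_k$, $|\gamma|\le\rho-1$, and $\frac{\alpha_k}{\alpha!}=\frac{1}{\gamma!}$ because $\alpha_k=\gamma_k+1$ and $\alpha!=(\gamma_k+1)\,\gamma_k!\prod_{j\ne k}\gamma_j!$. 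Also $D^{\gamma+\mathbf e_k}f(v)=D^\gamma\bigl(\tfrac{\partial f}{\partial x_k}\bigr)(v)$. Hence the derivative equals $\sum_{|\gamma|\le\rho-1}\frac{D^\gamma(\partial f/\partial x_k)(v)}{\gamma!}(x-v)^\gamma$, which is exactly $\calT_{v,\rho-1}(\partial f/\partial x_k)(x)$.

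There is essentially no real obstacle here; the only thing to be careful about is the factorial identity $\alpha_k/\alpha! = 1/\gamma!$ after the shift of index, and making sure the degree bound transforms as $|\alpha|\le\rho \iff |\gamma|\le\rho-1$ on the nonvanishing terms. For the iteration producing a general $\beta$ with $|\beta|\le\rho$, I would note that after applying $|\beta|$ first-order derivatives the Taylor order has dropped by exactly $|\beta|$, which stays nonnegative by hypothesis, so every intermediate expression is a genuine (nontrivial or zero) Taylor polynomial and the single-variable step applies at each stage.
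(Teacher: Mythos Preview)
Your argument is correct and follows essentially the same approach as the paper: differentiate the Taylor polynomial term by term and reindex. The only organizational difference is that you reduce to the case $|\beta|=1$ and iterate, whereas the paper applies $D^\beta$ in one stroke, using $D^\beta (x-v)^\alpha = \frac{\alpha!}{(\alpha-\beta)!}(x-v)^{\alpha-\beta}$ for $\alpha\ge\beta$ and then substituting $\alpha\mapsto\alpha+\beta$.
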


\begin{proof}
From the definition of $\calT_{v,\rho}$, for $x \in B$,
\begin{align*}
 D^\beta \calT_{v,\rho} f(x)
     &= \sum_{\substack{|\alpha| \le \rho \\ \alpha \ge \beta}}
     \frac{D^\alpha f(v)}{(\alpha-\beta)!} 
             (x_1-v_1)^{\alpha_1 - \beta_1} 
           \cdots (x_n-v_n)^{\alpha_n - \beta_n} \cr
     &= \sum_{|\alpha| \le \rho - |\beta|}
     \frac{D^{\alpha+\beta} f(v)}{\alpha!} 
             (x_1-v_1)^{\alpha_1} 
           \cdots (x_n-v_n)^{\alpha_n}
   = \calT_{v,\rho - |\beta|} D^\beta f(x).
\end{align*}
\end{proof}

From Lemmas~\ref{lem:Taylor1} and~\ref{lem:Taylor2} we obtain
\begin{lemma}\label{lem:Taylor3}
Let $v,w,e,B$ be as in Lemma~\ref{lem:Taylor1}.
Suppose that $f, g \in C^\infty(B)$ and that for some $r \ge 0$,
\begin{equation}\label{eq:fequalsg}
 D^\beta f|_e = D^\beta g|_e, \qquad |\beta| \le r.
\end{equation}
Then, for any $\rho \ge 0$,
\begin{equation}\label{eq:TfequalsTg}
 D^\beta \calT_{v,\rho}f|_e = D^\beta \calT_{v,\rho}g|_e, \qquad |\beta| \le r.
\end{equation}
\end{lemma}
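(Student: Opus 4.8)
The plan is to combine the two preceding lemmas in essentially the obvious way. Fix a multi-index $\beta$ with $|\beta| \le r$. Applying Lemma~\ref{lem:Taylor2} to both $f$ and $g$ with the same $\rho$, we obtain
$$ D^\beta \calT_{v,\rho} f = \calT_{v,\rho - |\beta|} D^\beta f
   \qquad\text{and}\qquad
   D^\beta \calT_{v,\rho} g = \calT_{v,\rho - |\beta|} D^\beta g. $$
So it suffices to show that $\calT_{v,\rho-|\beta|} D^\beta f$ and $\calT_{v,\rho-|\beta|} D^\beta g$ agree on $e$.

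That, in turn, is exactly an instance of Lemma~\ref{lem:Taylor1}. By hypothesis (\ref{eq:fequalsg}), $D^\beta f|_e = D^\beta g|_e$, and since $|\beta| \le r$ these are functions in $C^\infty(B)$ (here one uses that $f,g \in C^\infty(B)$, so every partial derivative is again smooth on all of $B$). Applying Lemma~\ref{lem:Taylor1} with the pair $(D^\beta f, D^\beta g)$ in place of $(f,g)$ and with order $\rho - |\beta| \ge 0$ gives
$$ \calT_{v,\rho-|\beta|} D^\beta f\big|_e = \calT_{v,\rho-|\beta|} D^\beta g\big|_e. $$
Chaining this with the two identities from Lemma~\ref{lem:Taylor2} yields $D^\beta \calT_{v,\rho} f|_e = D^\beta \calT_{v,\rho} g|_e$, which is (\ref{eq:TfequalsTg}). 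Since $\beta$ was an arbitrary multi-index with $|\beta|\le r$, the proof is complete.

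There is no real obstacle here; the only minor point worth stating explicitly is the domain/smoothness bookkeeping — that $D^\beta f$ and $D^\beta g$ genuinely lie in $C^\infty(B)$ so that Lemma~\ref{lem:Taylor1} applies verbatim, and that $\rho - |\beta|$ is a legitimate (nonnegative) Taylor order. Both follow immediately from $|\beta| \le r$ and $f,g \in C^\infty(B)$. I would also remark that the restriction $\rho \ge 0$ in the statement is harmless: if $\rho < |\beta|$ then both $\calT_{v,\rho-|\beta|}$ terms are interpreted as zero (equivalently, one only needs the conclusion for $|\beta| \le \min(r,\rho)$), but for the intended application $\rho$ will be taken large, so this degenerate case does not arise.
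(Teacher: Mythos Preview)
Your proof is correct and follows essentially the same route as the paper: reduce via Lemma~\ref{lem:Taylor2} to Taylor approximations of $D^\beta f$ and $D^\beta g$, then invoke Lemma~\ref{lem:Taylor1}. One small presentational slip: the claim that ``$\rho - |\beta|$ is a legitimate (nonnegative) Taylor order'' does \emph{not} follow from $|\beta|\le r$ alone (there is no assumed relation between $r$ and $\rho$); the paper handles this by treating the case $|\beta|>\rho$ separately, noting both sides of (\ref{eq:TfequalsTg}) vanish since $\calT_{v,\rho}f$ has degree $\le\rho$ --- which is exactly what your final remark says, so the argument is complete.
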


\begin{proof}
If $|\beta| > \rho$, equation (\ref{eq:TfequalsTg}) trivially holds
since both sides are equal to $0$.
If $|\beta| \le \rho$, by Lemma~\ref{lem:Taylor2},
equation (\ref{eq:TfequalsTg}) is equivalent to
$$
 \calT_{v,\rho-|\beta|} D^\beta f|_e = 
    \calT_{v,\rho-|\beta|} D^\beta g|_e,
$$
and by Lemma~\ref{lem:Taylor1}, this is implied by
equation (\ref{eq:fequalsg}).
\end{proof}

\section{Characterization of supersmoothness}\label{sec:charac}
We are now approaching a characterization of supersmoothness.
\subsection{Polynomial spline spaces}
For integers $r$ and $d$ with $0 \le r \le d$ let
$$
S_d^r(\Delta) := \{s\in C^r(\Omega): s|_T \in \Pi_d, \,\, T \in \Delta\},
$$
where~$\Pi_d$ is the linear space of polynomials in $\RR^n$
of degree at most $d$.
Thus $S_d^r(\Delta)$ is the usual linear space of polynomial splines
on $\Delta$ of smoothness $r$ and degree at most~$d$.

\subsection{Degeneracy}
Consider an interior cell $\Delta_v$.
By definition, for any $r \ge 0$ we have $\Pi_d \subset S_d^r(\Delta_v)$.
Sometimes, however, depending on $\Delta_v$ and $r$, we might have
$S_d^r(\Delta_v) = \Pi_d$.
In this case $S_d^r(\Delta_v)$ contains no `true' splines, only polynomials,
and we view $S_d^r(\Delta_v)$ as being degenerate in this sense.
\begin{definition}
We will say that
$S_d^r(\Delta_v)$ is \emph{degenerate} if $S_d^r(\Delta_v) = \Pi_d$.
\end{definition}

As an example, the space $S_r^r(\Delta_v)$ is degenerate for any
$r \ge 0$.

\subsection{Piecewise Taylor approximations}
Next recall the more general set of splines
$S(\Delta_v)$ and let
$$ \sigma = \{(U_T,f_T): T \in \Delta_v \} \in S(\Delta_v). $$
For any~$\rho \ge 0$, we can make the following
piecewise Taylor approximation of $\sigma$:
$$ \calT_{v,\rho}\sigma :=
    \{(\RR^n,\calT_{v,\rho}f_T) : T \in \Delta_v \} \in S(\Delta_v). $$
Due to Lemma~\ref{lem:Taylor3} we next show
\begin{lemma}\label{lem:Taylor4}
If $\sigma \in S^r(\Delta_v)$ for any $r \ge 0$
then $\calT_{v,\rho}\sigma \in S^r(\Delta_v)$ for any $\rho \ge 0$.
\end{lemma}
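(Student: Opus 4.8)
The plan is to verify, pair by pair, the defining condition of $S^r(\Delta_v)$ for the piecewise Taylor approximation $\calT_{v,\rho}\sigma$, reducing the $(n-1)$-face condition to the line-segment statement already proved in Lemma~\ref{lem:Taylor3}. So I would fix two simplices $T_1,T_2 \in \Delta_v$ meeting in a common $(n-1)$-face $F = T_1 \cap T_2$ and aim to show that $D^\beta \calT_{v,\rho}f_{T_1}|_F = D^\beta \calT_{v,\rho}f_{T_2}|_F$ for every multi-index $\beta$ with $|\beta| \le r$; since $T_1,T_2$ is then arbitrary, this is exactly the assertion $\calT_{v,\rho}\sigma \in S^r(\Delta_v)$.

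The one piece of genuine geometric input is the observation that $v \in F$: because $T_1$ and $T_2$ both lie in the cell $\Delta_v$, they both contain $v$, hence $v \in T_1 \cap T_2 = F$. Since $F$ is a simplex (in particular convex) and $v$ is one of its points, the segment $e = [v,w]$ lies in $F$ for every $w \in F$, and $F = \bigcup_{w \in F}[v,w]$. The hypothesis $\sigma \in S^r(\Delta_v)$ says precisely that $D^\beta f_{T_1}|_F = D^\beta f_{T_2}|_F$ for $|\beta| \le r$; restricting further to $e \subset F$ gives $D^\beta f_{T_1}|_e = D^\beta f_{T_2}|_e$ for $|\beta| \le r$. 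Both $f_{T_1}$ and $f_{T_2}$ are smooth on the open set $U_{T_1} \cap U_{T_2}$, which contains $F \supset e$ (and if one wants the domain in Lemma~\ref{lem:Taylor3} to be connected, one replaces it by a connected open neighbourhood of the compact connected segment $e$ inside $U_{T_1}\cap U_{T_2}$). Applying Lemma~\ref{lem:Taylor3} along $e$, with $w$ chosen $\ne v$, then yields $D^\beta \calT_{v,\rho}f_{T_1}|_e = D^\beta \calT_{v,\rho}f_{T_2}|_e$ for all $|\beta| \le r$.

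Finally I would let $w$ range over $F \setminus \{v\}$. The segments $[v,w]$ cover all of $F$ except possibly the single point $v$, and that point is harmless because $D^\beta \calT_{v,\rho}f_{T_1}$ and $D^\beta \calT_{v,\rho}f_{T_2}$ are polynomials, hence continuous, and $F \setminus \{v\}$ is dense in $F$. Hence $D^\beta \calT_{v,\rho}f_{T_1}|_F = D^\beta \calT_{v,\rho}f_{T_2}|_F$ for $|\beta| \le r$, which completes the argument. I do not expect a serious obstacle here: the only step requiring care is the reduction from the face $F$ to segments through $v$, which rests on the convexity of $F$ together with the membership $v \in F$; once that is in place the conclusion is an immediate application of Lemma~\ref{lem:Taylor3}.
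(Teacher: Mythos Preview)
Your argument is correct and follows essentially the same route as the paper: both fix adjacent simplices $T_1,T_2$ sharing an $(n-1)$-face $F$ containing $v$, cover $F$ by line segments through $v$, and apply Lemma~\ref{lem:Taylor3} along each segment. The only cosmetic difference is that the paper uses the specific segments from $v$ to the opposite $(n-2)$-face of $F$ (exploiting that $v$ is actually a vertex of $F$), while you use all segments $[v,w]$ with $w\in F$ and patch in $v$ by continuity; your extra remarks on the domain for Lemma~\ref{lem:Taylor3} are harmless.
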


\begin{proof}
Let $T_1,T_2 \in \Delta_v$ be two $n$-simplices
that share a common $(n-1)$-face $F$.
The face $F$ is the union of all the line segments $e$ that
connect $v$ to the $(n-2)$-dimensional face of $F$ opposite to $v$.
The pieces $f_{T_1}$ and $f_{T_2}$ have
the same derivatives up to order $r$ on $e$.
Therefore, by Lemma~\ref{lem:Taylor3},
the two Taylor approximations
$\calT_{v,\rho}f_{T_1}$ and $\calT_{v,\rho}f_{T_2}$ have
the same derivatives up to order $r$ on $e$.
Therefore, they have the same
derivatives up to order~$r$ on the whole face~$F$.
Thus $\calT_{v,\rho}\sigma \in S^r(\Delta_v)$ as claimed.
\end{proof}

\subsection{Characterization}
With the previous definitions in place the characterization is as follows.
\begin{theorem}\label{thm:charac}
Let $\Delta_v$ be an interior cell and suppose
$0 \le r \le \rho$. Then $S^r(\Delta_v) \subset C^\rho(\Delta_v)$
if and only if $S_\rho^r(\Delta_v)$ is degenerate.
\end{theorem}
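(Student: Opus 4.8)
The plan is to prove both directions by relating a general spline $\sigma \in S^r(\Delta_v)$ to its piecewise Taylor approximation $\calT_{v,\rho}\sigma$, which by Lemma~\ref{lem:Taylor4} again lies in $S^r(\Delta_v)$ and, being piecewise polynomial of degree at most $\rho$, actually lies in $S_\rho^r(\Delta_v)$. The key elementary observation I would establish first is that $\sigma \in C^\rho(\Delta_v)$ if and only if $\calT_{v,\rho}\sigma \in C^\rho(\Delta_v)$; indeed, for each $n$-simplex $T$ and each multi-index $\alpha$ with $|\alpha| \le \rho$, we have $D^\alpha(\calT_{v,\rho} f_T)(v) = D^\alpha f_T(v)$ because differentiating the Taylor polynomial and evaluating at the centre recovers the original derivative (this also follows directly from Lemma~\ref{lem:Taylor2} with $\beta = \alpha$, evaluated at $v$). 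Hence the condition ``all pieces agree up to order $\rho$ at $v$'' is literally the same condition for $\sigma$ and for $\calT_{v,\rho}\sigma$.

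For the ``if'' direction, assume $S_\rho^r(\Delta_v)$ is degenerate, i.e.\ $S_\rho^r(\Delta_v) = \Pi_\rho$. Take any $\sigma \in S^r(\Delta_v)$. Then $\calT_{v,\rho}\sigma \in S_\rho^r(\Delta_v) = \Pi_\rho$, meaning all its pieces are one and the same global polynomial of degree at most $\rho$; in particular they trivially agree to all orders at $v$, so $\calT_{v,\rho}\sigma \in C^\rho(\Delta_v)$, and by the equivalence above $\sigma \in C^\rho(\Delta_v)$. This shows $S^r(\Delta_v) \subset C^\rho(\Delta_v)$.

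For the ``only if'' direction, assume $S^r(\Delta_v) \subset C^\rho(\Delta_v)$. I want to show $S_\rho^r(\Delta_v) = \Pi_\rho$, so let $s \in S_\rho^r(\Delta_v)$. Viewing $s$ as an element of $S^r(\Delta_v)$ (each piece $s|_T$ extended to a polynomial on $\RR^n$), the hypothesis gives $s \in C^\rho(\Delta_v)$, so all pieces $s|_T$ and $s|_{T'}$ have matching derivatives up to order $\rho$ at $v$. Since each piece is a polynomial of degree at most $\rho$, it equals its own order-$\rho$ Taylor expansion at $v$, and two degree-$\le\rho$ polynomials with the same derivatives up to order $\rho$ at a point are identical. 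Therefore all pieces coincide as polynomials, so $s \in \Pi_\rho$, proving degeneracy.

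I do not expect a serious obstacle here: the machinery of Section~\ref{sec:Taylor}, especially Lemmas~\ref{lem:Taylor2} and~\ref{lem:Taylor4}, has been set up precisely so that the two sides of the equivalence translate into each other. The one point that deserves a careful sentence is the passage, in the ``only if'' direction, from ``matching derivatives up to order $\rho$ at $v$'' to ``equal as polynomials'', which uses that a polynomial of degree at most $\rho$ is determined by its derivatives up to order $\rho$ at a single point --- the natural place where the degree bound $d = \rho$ in $S_\rho^r$ is exactly what makes the argument close.
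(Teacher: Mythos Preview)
Your proof is correct and follows essentially the same route as the paper: use Lemma~\ref{lem:Taylor4} to pass from $\sigma$ to its piecewise Taylor approximation in $S_\rho^r(\Delta_v)$, invoke degeneracy to conclude all pieces are the same polynomial, and use $D^\alpha(\calT_{v,\rho} f_T)(v)=D^\alpha f_T(v)$ to transfer $C^\rho$ back to $\sigma$; conversely, embed $s\in S_\rho^r(\Delta_v)$ into $S^r(\Delta_v)$, get matching derivatives at $v$, and use that a degree-$\le\rho$ polynomial is determined by its order-$\rho$ jet at a point. The only cosmetic difference is that you isolate the equivalence ``$\sigma\in C^\rho(\Delta_v)\iff \calT_{v,\rho}\sigma\in C^\rho(\Delta_v)$'' up front, whereas the paper writes the chain $D^\alpha f_{T_1}(v)=D^\alpha \calT_{v,\rho}f_{T_1}(v)=D^\alpha s(v)=\cdots$ inline.
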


\begin{proof}
Suppose that $S_\rho^r(\Delta_v)$ is degenerate and let
$$ \sigma = \{(U_T,f_T): T \in \Delta_v \} \in S^r(\Delta_v). $$
By Lemma~\ref{lem:Taylor4},
$\calT_{v,\rho}\sigma \in S^r(\Delta_v)$.
Therefore we can define a polynomial spline $s \in S_\rho^r(\Delta_v)$, by
$$ s|_T = \calT_{v,\rho} f_T, \quad T \in \Delta_v. $$
By the assumption that
$S_\rho^r(\Delta_v)$ is degenerate,
$s \in \Pi_\rho$.
Then, for any  $T_1,T_2 \in \Delta_v$,
$$ D^\alpha f_{T_1}(v) = D^\alpha \calT_{v,\rho}f_{T_1}(v)
     = D^\alpha s(v) = D^\alpha \calT_{v,\rho}f_{T_2}(v) = 
                D^\alpha f_{T_2}(v), \quad |\alpha| \le \rho. $$
This proves that $S^r(\Delta_v) \subset C^\rho(\Delta_v)$.

Conversely, suppose that
$S^r(\Delta_v) \subset C^\rho(\Delta_v)$ and let
$s \in S_\rho^r(\Delta_v)$.
Then we can define
$$ \sigma = \{(\RR^n,s|_T) : T \in \Delta_v \} \in S^r(\Delta_v). $$
The assumption that $S^r(\Delta_v) \subset C^\rho(\Delta_v)$
implies that $\sigma \in C^\rho(\Delta_v)$.
Therefore, for any $T_1,T_2 \in \Delta_v$,
$$ D^\alpha s|_{T_1}(v) = D^\alpha s|_{T_2}(v), \quad |\alpha| \le \rho. $$
Since $s|_{T_1}, s|_{T_2} \in \Pi_\rho$, this implies that
$s|_{T_1} = s|_{T_2}$.
Thus $s \in \Pi_\rho$. This proves that $S_\rho^r(\Delta_v)$ is degenerate.
\end{proof}

We remark that this theorem also holds if we reduce the 
smoothness assumption on the pieces $f_T$ of 
the splines $\sigma$ in $S^r(\Delta_v)$
to being in $C^{\rho}(U_T)$ instead of in $C^\infty(U_T)$.

\subsection{Maximal order of supersmoothness}
We can also consider the \emph{mos} (maximal order of supersmoothness)
of $S^r(\Delta_v)$, i.e.,
$$ \mos S^r(\Delta_v) := 
    \max \{\rho \ge r : S^r(\Delta_v) \subset C^\rho(\Delta_v) \}. $$
To characterize this, observe that
we have a nested sequence of spaces,
$$ \Pi_r = S_r^r(\Delta_v) \subset S_{r+1}^r(\Delta_v) \subset
    S_{r+2}^r(\Delta_v) \subset \cdots. $$
Therefore, if $S_d^r(\Delta_v)$ is non-degenerate for some $d \ge r$, then
$S_k^r(\Delta_v)$ is non-degenerate for all~$k \ge d$.
Thus, for any cell $\Delta_v$ and any $r \ge 0$,
there is a unique highest degree
$d \ge r$ such that $S_d^r(\Delta_v)$ is degenerate.
From Theorem~\ref{thm:charac} we deduce
\begin{corollary}\label{cor:mos}
$\mos S^r(\Delta_v) =  \max \{d \ge r : 
                    \hbox{$S_d^r(\Delta_v)$ is degenerate} \}$.
\end{corollary}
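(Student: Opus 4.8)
The plan is to read off the corollary directly from Theorem~\ref{thm:charac}, using the nested chain of polynomial spline spaces recalled just above its statement. The key observation is that the two maxima in question range over \emph{the same} set of integers. Indeed, $\mos S^r(\Delta_v)$ is by definition the maximum of
$$ A := \{\rho \ge r : S^r(\Delta_v) \subset C^\rho(\Delta_v)\}, $$
while the right-hand side is the maximum of
$$ B := \{d \ge r : S_d^r(\Delta_v) \hbox{ is degenerate}\}, $$
and Theorem~\ref{thm:charac} asserts precisely that for each integer $\rho \ge r$ one has $S^r(\Delta_v) \subset C^\rho(\Delta_v)$ if and only if $S_\rho^r(\Delta_v)$ is degenerate. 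Hence $A = B$, and once this is established the corollary is immediate.

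The remaining point is to check that these maxima actually exist, i.e.\ that $A = B$ is a nonempty, bounded set of nonnegative integers. For nonemptiness I would invoke the example noted after the definition of degeneracy: $S_r^r(\Delta_v) = \Pi_r$ is degenerate, so $r \in B$. For boundedness I would use the chain $\Pi_r = S_r^r(\Delta_v) \subset S_{r+1}^r(\Delta_v) \subset \cdots$ exactly as in the paragraph preceding the corollary: degeneracy propagates downward in the degree, so $B$ is an initial segment $\{r, r+1, \ldots, D\}$ for the unique largest degree $D$ at which $S_D^r(\Delta_v)$ is degenerate. Then $\max A = \max B = D$, which is the claimed identity.

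I do not expect a genuine obstacle here; the mathematical substance is entirely contained in Theorem~\ref{thm:charac}, and what is left is bookkeeping. The only place to be slightly careful is matching the index ranges: both $\mos S^r(\Delta_v)$ and the right-hand side quantify over integers $\ge r$, and Theorem~\ref{thm:charac} is stated exactly under the hypothesis $0 \le r \le \rho$, so the equivalence it provides covers every relevant $\rho$ and there is no edge case to treat separately.
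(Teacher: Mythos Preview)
Your proposal is correct and follows essentially the same approach as the paper: the paper also derives the corollary directly from Theorem~\ref{thm:charac} together with the nested chain $\Pi_r = S_r^r(\Delta_v) \subset S_{r+1}^r(\Delta_v) \subset \cdots$, which shows that the degenerate degrees form an initial segment starting at $r$. Your explicit identification of the two sets $A$ and $B$ and the observation that Theorem~\ref{thm:charac} gives $A=B$ is exactly the content of the paper's one-line deduction.
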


\section{Applications}\label{sec:applications}

We now apply the characterization theorem to some
concrete examples. For a cell~$\Delta_v$ in $\RR^n$
and smoothness $r \ge 0$ the spline space
$S_d^r(\Delta_v)$, with $d \ge r$, is degenerate if
\begin{equation}\label{eq:compute}
 \dim S_d^r(\Delta_v) = \dim \Pi_d = \binom{d+n}{n}.
\end{equation}
For some cell configurations degeneracy is known
for specific degrees $d > r$.
We then conclude from Theorem~\ref{thm:charac} that
all splines in $S^r(\Delta_v)$ have supersmoothness of order $d$,
but we do not know whether $d$ is optimal.
However, if we know the dimensions of
all the spaces $S_d^r(\Delta_v)$, $d > r$,
we obtain the maximal supersmoothness from Corollary~\ref{cor:mos}
by finding the largest $d$ satisfying (\ref{eq:compute}).

We note also that Alfeld \cite{mds} has computed the dimension of
many spline spaces over various kinds of cell.
These computational results also determine supersmoothness
by Theorem \ref{thm:charac} or Corollary~\ref{cor:mos}.

\subsection{Clough-Tocher split}
In $\RR^2$, when $\Delta_v$ has three triangles it is a Clough-Tocher split,
$\Delta_{CT}$, and,
using the theory of Bernstein-B{\'e}zier polynomials,
Farin showed in~\cite[Theorem~7]{farin1980}
that $S_{r+1}^r(\Delta_{CT})$ is degenerate for any $r\ge 1$.
He then concluded in ~\cite[Corollary~8]{farin1980}
that the pieces of any spline in $S_d^r(\Delta_{CT})$, $1 \le r \le d$,
have matching derivatives of order $r+1$ at $v$.

We can now apply Theorem~\ref{thm:charac}
to conclude more generally that
$S^r(\Delta_{CT}) \subset C^{r+1}(\Delta_{CT})$
for $r \ge 1$.
However, this is not optimal supersmoothness for general~$r$.

\subsection{An arbitrary cell in 2D}
Sorokina made a substantial generalization of Farin's result.
She showed in~\cite[Theorem 3.1]{sorokina2010intrinsic} that
if $\Delta_v$ has $m$ triangles, and the
$m$ interior edges have different slopes, then
for $0 \le r \le d$, the pieces of
any spline $s \in S_d^r(\Delta_v)$ have matching derivatives
at $v$ up to order
\begin{equation}\label{eq:rho2D}
 \rho = r + \left\lfloor \frac{r+1}{m-1} \right\rfloor.
\end{equation}
The proof was based on comparing the dimension of
$S_d^r(\Delta)$ with those of superspline spaces.
Since $\rho$ in (\ref{eq:rho2D})
is independent of the degree $d$, one might expect a more general result.
This was also suggested by the work of
Shekhtman and Sorokina~\cite{shekhtman2015intrinsic}.
From (\ref{eq:rho2D}) it follows that there is
at least one order of supersmoothness when $r \ge m-2$
for any degree $d \ge r$.
Shekhtman and Sorokina showed that this is also true
for more general splines, in other words, in our notation,
$S^r(\Delta_v) \subset C^{r+1}(\Delta_v)$ when $r \ge m-2$.
Their proof was based on expressing partial derivatives
as linear combinations of directional derivatives
along the edges meeting at $v$.
Using Corollary~\ref{cor:mos}, we can now improve this result
to match that of the polynomial case.
To do this, we first transform the dimension formula
of Lai and Schumaker~\cite{lai2007spline}
into a more suitable form.

\begin{lemma}\label{lem:celldim}
Suppose $\Delta_v$ has $m$ triangles and suppose
there are $m_v$ different slopes among the interior edges of $\Delta_v$.
For $0 \le r \le d $,
\begin{equation}\label{eq:celldim}
 \dim S_d^r(\Delta_v)
  = \dim \Pi_{d} + (m-m_v) \dim \Pi_{d-r-1}
  + \sum_{j=1}^{d-r} (\tau_{v, j})_+,
\end{equation}
where $\tau_{v,j} := j(m_v - 1)-(r+1)$, and
$(x)_{+}:=x$ if $x > 0$ and $(x)_{+}:=0$ otherwise.
\end{lemma}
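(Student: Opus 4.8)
The plan is to reconcile two known formulas for $\dim S_d^r(\Delta_v)$: the one given by Lai and Schumaker in~\cite{lai2007spline} for a cell with vertices in general position (or rather, with arbitrary edge configurations), and the target formula~(\ref{eq:celldim}). The standard reference formula (see~\cite[Theorem~9.3]{lai2007spline}) expresses the dimension of the spline space over a cell in terms of the number of triangles $m$, the smoothness $r$, the degree $d$, and quantities $e_j$ counting interior edges with a prescribed number of coincident slopes passing through the central vertex; more precisely it typically reads as $\dim\Pi_d$ plus a sum over the interior edges of contributions $(d - r - j + \text{something})_+$-type terms, weighted according to how many edges share a common line through $v$. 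So the first step is simply to write down that formula verbatim in the notation of the present paper.

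Next I would introduce the bookkeeping. Among the $m$ interior edges emanating from $v$, group them by slope: there are $m_v$ distinct slopes, so the edges partition into $m_v$ classes. Some slopes may be shared by two edges (the two edges being collinear through $v$, i.e.\ forming a single line segment through $v$); in a planar cell an interior slope is shared by at most two interior edges, and a slope is shared by exactly two precisely when that line passes through $v$ with triangles on both sides. Counting: if $\mu$ of the $m_v$ slopes are ``doubled'' then $m = m_v + \mu$, so $\mu = m - m_v$. This identifies the coefficient $m - m_v$ appearing in front of $\dim\Pi_{d-r-1}$: each doubled edge (an interior line through $v$) contributes an extra $\dim\Pi_{d-r-1}$ to the dimension, exactly as for a line crossing a vertex in the classical smoothness-conditions count, while a ``singleton'' edge contributes the smaller term $(d-r-j)_+$-type pieces that get absorbed into the $\tau_{v,j}$ sum.

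Then the core computation: substitute the grouping into the Lai--Schumaker formula and simplify. The contributions from the $\mu = m - m_v$ doubled slopes collapse to $(m-m_v)\dim\Pi_{d-r-1}$ after using the identity $\sum_{j\ge 0}\binom{j+1}{1}$-type telescoping — more plainly, $\sum_{i=1}^{d-r}(d-r-i+1) = \dim\Pi_{d-r-1}$ in the relevant range — while the contributions from the $m_v$ distinct slopes acting ``collectively'' (the constraint that a spline restricted to the union of triangles around $v$ must match across all $m_v$ lines) yield the terms $(\tau_{v,j})_+ = (j(m_v-1) - (r+1))_+$ summed over $j = 1,\dots,d-r$. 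The quantity $j(m_v-1)$ is the number of independent conditions imposed at level $j$ by $m_v$ lines through a point (a fan of $m_v$ lines behaves, for the purposes of counting smoothness conditions on $j$-th order Bernstein--Bézier data, like $m_v - 1$ independent generic lines after accounting for the single common point), and subtracting $r+1$ reflects that the first $r+1$ ``rings'' of Bézier coefficients around $v$ are already determined by the $C^r$ conditions. Taking the positive part handles the regime where the conditions become dependent.

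The main obstacle, and where care is genuinely needed, is the second-to-last step: matching the precise index ranges and the precise form of the summand in whatever version of the Lai--Schumaker cell-dimension formula we start from. Their formula is stated with a specific indexing of interior edges and a specific case-split (interior versus boundary edges, and singular versus nonsingular vertices), and it is easy to be off by one in the exponent of $\Pi_{d-r-1}$ versus $\Pi_{d-r}$, or in the lower limit of the $j$-sum. I would therefore verify the final formula~(\ref{eq:celldim}) against at least two sanity checks: (i) the Clough-Tocher case $m = m_v = 3$, $d = r+1$, where all three slopes are distinct, $m - m_v = 0$, and the $\tau_{v,j}$ sum has a single term $\tau_{v,1} = 2 - (r+1) = 1-r \le 0$ for $r\ge 1$, giving $\dim S_{r+1}^r(\Delta_{CT}) = \dim\Pi_{r+1}$, i.e.\ degeneracy, recovering Farin's result; and (ii) a cell with one interior line through $v$ plus one extra edge (so $m=3$, $m_v = 2$), comparing against the known dimension. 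Once these check out the general derivation is just careful algebra.
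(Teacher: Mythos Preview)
Your plan is far more elaborate than what is actually needed, and the vagueness about the starting formula (``it typically reads as \ldots'') is the real gap. The Lai--Schumaker cell formula you cite, \cite[Theorem~9.3]{lai2007spline}, already comes in the form
\[
  \dim S_d^r(\Delta_v)
  = \binom{r+2}{2} + m\binom{d-r+1}{2} + \sum_{j=1}^{d-r}(-\tau_{v,j})_+,
\]
with exactly the same $\tau_{v,j}=j(m_v-1)-(r+1)$ as in the statement. There is no edge-by-edge decomposition to unpack, no $e_j$'s, no B\'ezier-ring counting: the number $m_v$ of distinct slopes is already baked into $\tau_{v,j}$. The paper's entire proof is then two lines of algebra: compute the closed form
\[
  \sum_{j=1}^{d-r}\tau_{v,j}
  = m_v\binom{d-r+1}{2} - \binom{d+2}{2} + \binom{r+2}{2},
\]
add this quantity to the right-hand side and subtract it term-by-term from the last sum, and use the identity $(-x)_+ + x = (x)_+$. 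Recognizing $\binom{d+2}{2}=\dim\Pi_d$ and $\binom{d-r+1}{2}=\dim\Pi_{d-r-1}$ gives~(\ref{eq:celldim}) immediately.

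So your proposed slope-grouping argument, the interpretation of $m-m_v$ as the count of ``doubled'' edges, and the B\'ezier heuristic for $j(m_v-1)$ are all unnecessary for this lemma: they amount to re-deriving the Lai--Schumaker formula rather than transforming it. Those ideas are not wrong as intuition, but as a proof they would require substantially more work (and you yourself flag the off-by-one hazards). Look up the exact cell formula first; once you have it, the lemma is a one-identity rewrite.
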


\begin{proof}
The dimension of $S_{d}^r(\Delta_v)$ was derived in
 \cite[Theorem 9.3]{lai2007spline} in the form
\begin{equation}\label{eq:celldimLS}
 \dim S_d^r(\Delta_v)
  = \binom{r+2}{2} + m \binom{d-r+1}{2}
  + \sum_{j=1}^{d-r} (-\tau_{v,j})_+.
\end{equation}
Using the fact that
$$ \sum_{j=1}^{d-r} \tau_{v,j} =
 m_v \sum_{j=1}^{d-r} j - \sum_{j=1}^{d-r} (r+j+1)
  = m_v \binom{d-r+1}{2} 
  - \binom{d+2}{2} + \binom{r+2}{2}, $$
we can rewrite (\ref{eq:celldimLS}) as
$$
 \dim S_d^r(\Delta_v)
  = \dim \Pi_{d} + (m-m_v) \binom{d-r+1}{2}
  + \sum_{j=1}^{d-r} \big((-\tau_{v,j})_+ + \tau_{v,j}\big).
$$
Then, using the fact that
$(-x)_{+} + x = x_+$, the result follows.
\end{proof}

\begin{theorem}\label{thm:one}
Suppose $\Delta_v$ has $m$ triangles and suppose
there are $m_v$ different slopes among the interior edges of $\Delta_v$.
Then for $r \ge 0$,
\begin{equation}\label{eq:mos2D}
 \mos S^r(\Delta_v) = \begin{cases}
             r + \left\lfloor \frac{r+1}{m-1} \right\rfloor,
                 & m_v = m; \cr
             r, & m_v < m.
              \end{cases}
\end{equation}
\end{theorem}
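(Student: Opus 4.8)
The plan is to combine Corollary~\ref{cor:mos} with Lemma~\ref{lem:celldim}: by the corollary, $\mos S^r(\Delta_v)$ is the largest degree $d \ge r$ for which $S_d^r(\Delta_v)$ is degenerate, i.e., for which the right-hand side of (\ref{eq:celldim}) equals $\dim\Pi_d$. Thus everything reduces to solving, in $d$, the equation
\[
 (m-m_v) \dim \Pi_{d-r-1} + \sum_{j=1}^{d-r} (\tau_{v,j})_+ = 0,
\]
where $\tau_{v,j} = j(m_v-1)-(r+1)$ and $\dim\Pi_{d-r-1} = \binom{d-r+1}{2}$ (with the convention that this is $0$ when $d-r-1<0$, i.e.\ $d=r$). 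Since both terms on the left are nonnegative, degeneracy holds precisely when both vanish.

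First I would handle the case $m_v < m$. Then $m - m_v \ge 1$ and $\dim\Pi_{d-r-1} = 0$ forces $d-r-1 < 0$, i.e.\ $d = r$; and indeed $S_r^r(\Delta_v) = \Pi_r$ is degenerate (noted in the text after the definition of degeneracy). So the largest degenerate degree is $d=r$, giving $\mos S^r(\Delta_v) = r$, as claimed.

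Next, the case $m_v = m$. Now the first term drops out entirely and degeneracy is equivalent to $\tau_{v,j} \le 0$ for all $j = 1,\ldots, d-r$, i.e.\ $j(m-1) \le r+1$ for all such $j$. The binding constraint is $j = d-r$ (the largest), so $S_d^r(\Delta_v)$ is degenerate iff $(d-r)(m-1) \le r+1$, i.e.\ $d - r \le \frac{r+1}{m-1}$. Since $d-r$ is a nonnegative integer, the largest admissible value is $d - r = \lfloor (r+1)/(m-1) \rfloor$, so $\mos S^r(\Delta_v) = r + \lfloor (r+1)/(m-1) \rfloor$, matching (\ref{eq:mos2D}). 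One should note the degenerate sub-case $m=2$: then $m-1=1$, every $\tau_{v,j} = j - (r+1)$, so $\tau_{v,d-r} = d - 2r - 1 \le 0$ iff $d \le 2r+1$ — but one must double-check whether $m = 2$ can occur for an interior cell (an interior vertex in 2D has $m \ge 3$), so in practice $m \ge 3$ and $m-1 \ge 2$; the formula $\lfloor(r+1)/(m-1)\rfloor$ is what it is.

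I expect the only real subtlety to be bookkeeping around the boundary conventions: confirming that $\dim\Pi_{d-r-1}$ is correctly read as $0$ exactly when $d=r$ (so that the $m_v<m$ analysis is clean), and confirming that when $d = r$ the sum $\sum_{j=1}^{d-r}$ is empty so $S_r^r(\Delta_v)$ is degenerate unconditionally — which is consistent and means $\mos$ is always well-defined and at least $r$. There is no hard analytic obstacle here; the content of the theorem is entirely in the dimension formula (\ref{eq:celldim}) of Lemma~\ref{lem:celldim} and in the characterization of Corollary~\ref{cor:mos}, and the present proof is just the elementary extraction of the threshold degree from the nonnegativity of the two correction terms.
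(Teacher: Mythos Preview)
Your proposal is correct and follows essentially the same argument as the paper's own proof: both invoke Corollary~\ref{cor:mos} to reduce to finding the largest $d$ with $S_d^r(\Delta_v)$ degenerate, then read off that threshold from the two nonnegative correction terms in the dimension formula of Lemma~\ref{lem:celldim}, splitting on whether $m_v<m$ or $m_v=m$. Your extra remarks about the empty sum at $d=r$ and the irrelevance of $m=2$ for an interior cell are fine side observations but not needed for the argument.
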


\begin{proof}
By Corollary~\ref{cor:mos},
it is sufficient to determine the highest degree $d\geq r$
such that $S_d^{r}(\Delta_v)$ is degenerate, i.e.,
such that $\dim S_{d}^r(\Delta_v) = \dim \Pi_d$.
To do this we use Lemma~\ref{lem:celldim}.
Suppose $m_v < m$.
If $d = r+1$, the second term in (\ref{eq:celldim}) is
strictly positive and so $S_{r+1}^r(\Delta_v)$ is non-degenerate.
Therefore $S_d^r(\Delta_v)$ is degenerate if and only if $d=r$.
Otherwise, $m_v = m$.
Then considering the third term in (\ref{eq:celldim}),
$S_d^r(\Delta_v)$ is degenerate if and only if $\tau_{v,j} \le 0$
for all $j=1,\ldots, d-r$,
or equivalently $\tau_{v,d-r} \le 0$,
which is equivalent to
$$ d \le r + \left\lfloor \frac{r+1}{m-1} \right\rfloor. $$
\end{proof}

As an example, for the Clough-Tocher split we have $m=m_v = 3$
and so
\begin{equation}\label{eq:CT}
 \mos S^r(\Delta_{CT}) = r + \left\lfloor \frac{r+1}{2} \right\rfloor.
\end{equation}

\subsection{The Alfeld split in $\RR^n$}
The dimensions of the spaces $S_d^r(\Delta_v)$ are not
currently known for a general cell $\Delta_v$ in~$\RR^n$ for $n \ge 3$.
However, they are known in special cases.
One of these is the Alfeld split in $\RR^n$.
In $\RR^n$, $n\geq 2$, the split is constructed
by choosing any $n$-dimensional simplex~$T$ and splitting it into
$n+1$ smaller simplices by choosing an arbitrary interior point $v$ in $T$
and connecting it to each of the $n+1$ faces (of dimension $n-1$) of $T$.
We denote this split by $\Delta_{A,n}$.
The 3D case $\Delta_{A,3}$ is shown in Figure~\ref{fig:Alfeld}.

Using the theory of Bernstein-B{\'e}zier polynomials,
Worsey and Farin showed in~\cite[Lemma 3.1]{worsey1987ann}
that $S_2^1(\Delta_{A,n})$ is degenerate.
From this, Theorem~\ref{thm:charac} implies that
$S^1(\Delta_{A,n}) \subset C^2(\Delta_{A,n})$.
But we can make a further generalization by invoking
the recently derived dimension formula of
Foucart and Sorokina~\cite{foucart2013generating}
and Schenck~\cite{schenck2014splines}.
Let us define, for $n \ge 1$ and $r \ge 0$,
$$ \rho_{n,r} := r + (n-1) \left\lfloor \frac{r+1}{2} \right\rfloor. $$

\begin{theorem}\label{thm:dim-Alfeld}
The maximal order of supersmoothness of the Alfeld split is
$$ \mos S^{r}(\Delta_{A,n}) = \rho_{n,r}. $$
\end{theorem}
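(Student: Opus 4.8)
The plan is to invoke Corollary~\ref{cor:mos}, which reduces the statement to finding the largest degree $d \ge r$ for which $S_d^r(\Delta_{A,n})$ is degenerate, equivalently (by (\ref{eq:compute})) for which $\dim S_d^r(\Delta_{A,n}) = \binom{n+d}{n}$. For $n=2$ the Alfeld split has $m=m_v=3$, so this already follows from Theorem~\ref{thm:one}, which gives $\mos S^r(\Delta_{A,2}) = r + \lfloor (r+1)/2 \rfloor = \rho_{2,r}$; the substance is therefore in the case $n \ge 3$, where we use the dimension formula of Foucart--Sorokina~\cite{foucart2013generating} and Schenck~\cite{schenck2014splines}, although the final argument will be uniform in $n$.

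The core step is to put that dimension formula into the normal form already exploited in the proof of Theorem~\ref{thm:one}, namely
$$ \dim S_d^r(\Delta_{A,n}) = \binom{n+d}{n} + E_{n,r}(d), $$
where $E_{n,r}(d) \ge 0$ is a finite sum of non-negative terms, each of which is a binomial coefficient whose upper entry is linear in $d$ (hence vanishes once $d$ is small enough) or a truncation $(\,\cdot\,)_+$ of such. This is the analogue of the passage from (\ref{eq:celldimLS}) to (\ref{eq:celldim}) in Lemma~\ref{lem:celldim}: one peels off the leading $\binom{n+d}{n}=\dim\Pi_d$ and collapses the remaining sums with Vandermonde/hockey-stick identities. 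Two consequences are all we need: $E_{n,r}$ is non-negative, consistent with $\Pi_d\subset S_d^r(\Delta_{A,n})$; and $E_{n,r}$ is non-decreasing in $d$, consistent with the nesting $S_r^r(\Delta_{A,n})\subset S_{r+1}^r(\Delta_{A,n})\subset\cdots$. Hence $E_{n,r}(d)=0$ if and only if its ``boundary'' summand --- the one carrying the largest summation index --- vanishes.

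Setting that boundary summand to zero gives an explicit linear inequality in $d$. Solving it and simplifying the right-hand side through a short case split on the parity of $r$ (exactly the split between $r+1$ even and $r+1$ odd implicit in the proof of Theorem~\ref{thm:one}) yields $d \le r + (n-1)\lfloor (r+1)/2 \rfloor = \rho_{n,r}$. Thus the largest $d$ for which $S_d^r(\Delta_{A,n})$ is degenerate is $\rho_{n,r}$, and Corollary~\ref{cor:mos} gives $\mos S^r(\Delta_{A,n}) = \rho_{n,r}$. I would finish with two consistency checks: the $n=2$ case against (\ref{eq:mos2D}), and the case $r=1$ against Worsey--Farin's degeneracy of $S_2^1(\Delta_{A,n})$~\cite{worsey1987ann}; there $\rho_{n,1}=n$, so our result is consistent with --- and sharpens --- the bound $\mos S^1(\Delta_{A,n}) \ge 2$.

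The main obstacle lies entirely in the bookkeeping: bringing the published formula into the ``$\dim\Pi_d$ plus non-negative terms'' shape, and then extracting the threshold. The factor $n-1$ together with the floor function makes off-by-one errors easy, so the delicate point is to check that the boundary summand is $\le 0$ for every $d \le \rho_{n,r}$ while being strictly positive at $d=\rho_{n,r}+1$, with the parity analysis done carefully. A secondary caveat concerns the range of validity of the cited dimension formula: if it is only guaranteed for $d$ above some bound, the finitely many small degrees --- relevant mainly when $r$ is small --- must be verified separately, e.g.\ via the Worsey--Farin degeneracy of $S_2^1$ and a Bernstein--B\'ezier argument in the remaining low-degree cases.
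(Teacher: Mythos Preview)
Your proposal is correct and follows essentially the same approach as the paper: invoke the Foucart--Sorokina/Schenck dimension formula, which already arrives in the form $\dim S_d^r(\Delta_{A,n}) = \dim \Pi_d + A(n,d,r)$ with $A(n,d,r)$ an explicit parity-split sum of binomial coefficients (so the ``normal form'' rewriting you anticipate as the main bookkeeping obstacle is in fact trivial), read off the threshold $d$ at which $A(n,d,r)$ first becomes positive, and apply Corollary~\ref{cor:mos}. Your caveat about the range of validity is also unnecessary here, as the cited formula holds for all $0 \le r \le d$.
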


\begin{proof}
The dimensions of the polynomial spline spaces on the Alfeld split
were generated and conjectured
by Foucart and Sorokina~\cite{foucart2013generating}
and proved by Schenck~\cite{schenck2014splines}:
for $0 \le r \le d$,
$$
\dim S_d^r(\Delta_{A,n}) = \dim \Pi_d + A(n, d, r),
$$
where 
$$
A(n, d, r)=\begin{cases}
n\binom{d+n-(r+1)(n+1)/2}{n}, \quad \mbox{if $r$ is odd};\\
\binom{d+n-1-r(n+1)/2}{n}+\cdots +\binom{d-r(n+1)/2}{n},
    \quad \mbox{if $r$ is even}.
\end{cases}
$$
Therefore, $S_d^r(\Delta_{A,n})$ is degenerate if and only if $A(n,d,r) = 0$,
or equivalently if
$$
\begin{cases}
{d-(r+1)(n+1)/2\leq -1}, \quad \mbox{if $r$ is odd};\\
d-1-r(n+1)/2\leq -1, \quad \mbox{if $r$ is even}.
\end{cases}
$$
By Corollary~\ref{cor:mos}, the maximal order of supersmoothness
is the largest such $d$, i.e.,
$$
d =
\begin{cases}
(r+1)(n+1)/2 -1, \quad \mbox{if $r$ is odd};\\
r(n+1)/2, \quad \mbox{if $r$ is even},
\end{cases}
$$
or equivalently, $d = \rho_{n,r}$.
\end{proof}

For example, $S^1(\Delta_{A,n}) \subset C^n(\Delta_{A,n})$, and
in particular, $S^1(\Delta_{A,3}) \subset C^3(\Delta_{A,n})$,
which shows that the $C^1$
macro-element on the Alfeld split $\Delta_{A,3}$
described in \cite[Section 18.3]{lai2007spline}
has supersmoothness of order $3$.

We note that Theorem~\ref{thm:dim-Alfeld} in the case $n=2$
agrees with the supersmoothness of
the Clough-Tocher split in equation (\ref{eq:CT}).

\subsection{The $\Delta_{k,n}$ split}
Worsey and Farin $\cite{worsey1987ann}$ proposed an alternative
generalization of the Clough-Tocher split to $\RR^n$,
using recursion through the
Euclidean dimensions; see also \cite{sorokina2009multivariate}.
To split an $n$-simplex $T$,
they first split the faces of $T$ of dimension 2 (triangles)
by making a Clough-Tocher split.
They next split each 3-face (a tetrahedron) $F$ of $T$ by
choosing any point in the relative interior of $F$
and connecting it to the twelve triangles on the boundary of $F$
constructed in the previous step.
They continue in a similar way, next splitting faces of $T$ of dimension 4
and so on.
Part of a Worsey-Farin split in 3D is shown in Figure \ref{fig:worsey-farin},
viewed as a refinement of an Alfeld split.
One of the subsimplices of the Alfeld split has
been split into three.
\begin{figure}[ht]
\centering
\includegraphics[height=0.17\textheight]{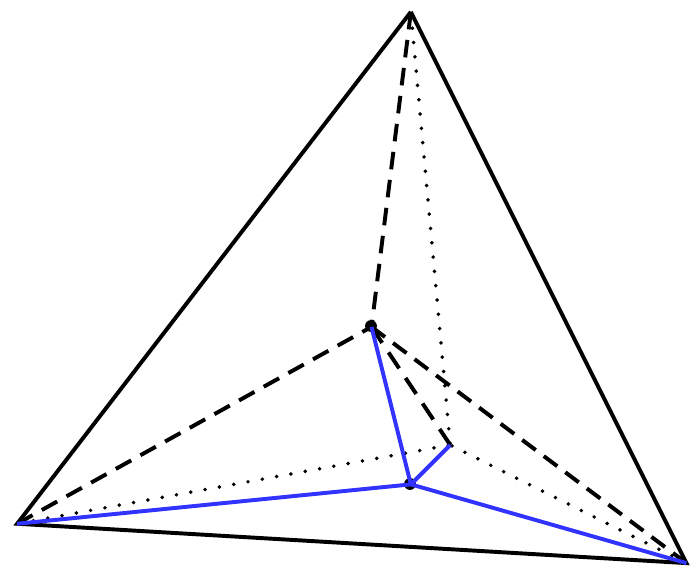}
\caption{Part of a Worsey-Farin split in 3D.}
\label{fig:worsey-farin}
\end{figure}

Let us consider a more general split.
We choose any Euclidean dimension $k$, $1 \le k \le n$.
We then initialize the splitting by splitting each $k$-face $F$ of $T$
by choosing any point in the relative interior of $F$
and connecting it to the $(k-1)$-faces of $F$.
Then, for $j=k+1,\ldots,n$ in sequence,
we split each $j$-face $F$ of $T$
by choosing any point in the relative interior of $F$
and connecting it to the
$$ (j+1) \times \frac{j!}{k!} = \frac{(j+1)!}{k!} $$
simplices of dimension $(j-1)$
on the boundary of $F$ constructed in the previous step.
The resulting split of $T$ is a cell
around the point $v$ in the interior of~$T$ chosen at the last
step ($j=n$). It has $(n+1)!/k!$ sub-simplices and we denote
it by~$\Delta_{k,n}$.

For example, in 2D, $\Delta_{2,2}$ is a Clough-Tocher split
and $\Delta_{1,2}$ is a Powell-Sabin 6-split.
In 3D, $\Delta_{3,3}$ is an Alfeld split,
$\Delta_{2,3}$ is a Worsey-Farin split
and $\Delta_{1,3}$ is a Worsey-Piper split.

By construction, each of the $(n-1)$-faces of $T$ is itself
split into a $\Delta_{k,n-1}$ split.
A split $\Delta_{k,n}$, $k < n$, can also be viewed as a refinement of
a split $\Delta_{k+1,n}$.

It was shown by Worsey and Farin~\cite{worsey1987ann} that
$S^1_2(\Delta_{2,n})$ is degenerate for any $n\geq 2$.
Based on this observation, they concluded, as `an interesting aside', that
their $C^1$ piecewise-cubic element has supersmoothness of
order $2$ at $v$.
Theorem~\ref{thm:charac} implies more generally that
$S^1(\Delta_{2,n}) \subset C^2(\Delta_{2,n})$.
Using now degeneracy over the
Alfeld split in $\RR^k$ we obtain a more general result.

\begin{theorem}\label{thm:mos-WF}
The maximal order of supersmoothness of a $\Delta_{k,n}$ split,
$2 \le k \le n$, is bounded as follows:
$$ \rho_{k,r} \le \mos S^r(\Delta_{k,n}) \le \rho_{n,r}. $$
\end{theorem}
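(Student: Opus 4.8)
The plan is to prove the two inequalities separately, using Corollary~\ref{cor:mos} to translate statements about $\mos$ into statements about degeneracy of polynomial spline spaces, together with the two structural facts about $\Delta_{k,n}$ noted just before the theorem: that $\Delta_{k,n}$ restricted to each $(n-1)$-face of $T$ is a $\Delta_{k,n-1}$ split, and that $\Delta_{k,n}$ is a refinement of $\Delta_{k+1,n}$. By Corollary~\ref{cor:mos}, it suffices to show: (a) $S_d^r(\Delta_{k,n})$ is degenerate for every $d \le \rho_{k,r}$ (this gives the lower bound $\rho_{k,r} \le \mos$); and (b) $S_d^r(\Delta_{k,n})$ is non-degenerate for every $d > \rho_{n,r}$ (this gives the upper bound $\mos \le \rho_{n,r}$).

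For the upper bound (b), I would exploit the refinement relation. Since $\Delta_{k,n}$ is a refinement of $\Delta_{k+1,n}$, which in turn refines $\Delta_{k+2,n}$, and so on up to $\Delta_{n,n}=\Delta_{A,n}$, we have the inclusion $S_d^r(\Delta_{A,n}) \subset S_d^r(\Delta_{k,n})$ for all $d$ and $r$: any polynomial spline that is $C^r$ across the coarser faces is in particular $C^r$ across the subset of faces retained, hence a spline on the finer partition. Therefore, if $S_d^r(\Delta_{A,n})$ is non-degenerate, so is $S_d^r(\Delta_{k,n})$. By Theorem~\ref{thm:dim-Alfeld}, $S_d^r(\Delta_{A,n})$ is non-degenerate exactly when $d > \rho_{n,r}$, giving $\mos S^r(\Delta_{k,n}) \le \rho_{n,r}$ via Corollary~\ref{cor:mos}. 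Strictly speaking one should check the partition-refinement inclusion is legitimate here, i.e. that every interior $(n-1)$-face of $\Delta_{A,n}$ is a union of interior $(n-1)$-faces of $\Delta_{k,n}$; this follows from the recursive construction, since refining lower-dimensional faces only subdivides the boundary faces and adds new interior ones, never removing or distorting the Alfeld interior faces.

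For the lower bound (a), the natural route is induction on the Euclidean dimension $n$, with the base case $n=k$ being precisely $\Delta_{k,k}=\Delta_{A,k}$, for which Theorem~\ref{thm:dim-Alfeld} gives degeneracy of $S_d^r$ for all $d\le\rho_{k,r}$. For the inductive step, suppose $s \in S_d^r(\Delta_{k,n})$ with $d \le \rho_{k,r}$; we want $s \in \Pi_d$. The idea is that on each $(n-1)$-face $F$ of the original simplex $T$, the restriction of the partition is a $\Delta_{k,n-1}$ split, and a $C^r$ spline over $\Delta_{k,n}$ restricts to a $C^r$ spline over that $\Delta_{k,n-1}$; since $d \le \rho_{k,r} = \rho_{k,n-1,r}$ reduces to the dimension-$(n-1)$ hypothesis (note $\rho_{k,r}$ does not depend on the ambient dimension), the inductive hypothesis forces $s|_F$ to be a single polynomial on each outer face. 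One must then promote this boundary information, plus the interior smoothness of $s$ across the cone faces emanating from $v$, to conclude $s$ is globally polynomial — e.g. via a Bernstein--Bézier argument on the cell, or by arguing that $s$ agrees to high order with a fixed polynomial along every ray from $v$ and hence everywhere.

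The main obstacle I expect is the inductive step for the lower bound: restricting a spline to a boundary face and invoking the lower-dimensional degeneracy result only controls $s$ on $\partial\Omega_v$, and one still needs a genuine argument — likely combinatorial/Bernstein--Bézier in flavour, tracking domain points and smoothness conditions across the interior faces through $v$ — to propagate polynomiality into the interior of the cell. This is exactly the kind of step that in the polynomial literature requires the delicate dimension counts of Worsey--Farin or Lai--Schumaker, so rather than redo it from scratch it is cleaner to cite the known degeneracy of $S_2^1(\Delta_{k,n})$ (Worsey--Farin) as a sanity check and, for the general lower bound, to reduce to the degeneracy of $S_d^r$ over the Alfeld split $\Delta_{A,k}$ sitting inside $\Delta_{k,n}$: the $k$-face through $v$ carries an Alfeld-$\RR^k$ split, and the recursive cone structure of $\Delta_{k,n}$ over it should let one lift a polynomial on that $k$-dimensional slice to all of $\Omega_v$ when $d\le\rho_{k,r}$. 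Making that lifting precise is the crux, and I would expect it to occupy the bulk of the proof.
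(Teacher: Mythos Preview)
Your upper bound argument is correct and matches the paper's: refinement gives $S_d^r(\Delta_{A,n}) \subset S_d^r(\Delta_{k,n})$, so non-degeneracy of the Alfeld space for $d>\rho_{n,r}$ (Theorem~\ref{thm:dim-Alfeld}) forces non-degeneracy of $S_d^r(\Delta_{k,n})$.

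For the lower bound, your inductive framework (on $n\ge k$, base case $\Delta_{k,k}=\Delta_{A,k}$) is exactly right, but you have correctly identified that restricting $s$ only to the \emph{boundary} faces $F$ of $T$ leaves a real gap: knowing $s|_F$ is a single polynomial on each outer face does not by itself force the pieces to coincide in the interior, and neither the Bernstein--B\'ezier bookkeeping you allude to nor the ``Alfeld-in-a-$k$-face'' idea is the right fix. The missing observation is a \emph{slicing} argument. For each boundary face $F$ with interior splitting point $w$, and for every $\lambda\in(0,1]$, the parallel slice
\[
F_\lambda=\{(1-\lambda)v+\lambda x:\ x\in F\}
\]
inherits a $\Delta_{k,n-1}$ split (since the interior faces of $\Delta_{k,n}$ are cones from $v$ over the faces of the boundary split), and the restriction of $s$ to $F_\lambda$ lies in $S_d^r(\Delta_{k,n-1}(F_\lambda))$. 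By the induction hypothesis this restriction is a single polynomial for every $\lambda$, so any two pieces in the cone over $F$ agree on every $F_\lambda$, hence on an open set, hence are identical polynomials. This collapses the cone over each $F$ to a single piece, so $s\in S_d^r(\Delta_{A,n})$; since $d\le\rho_{k,r}\le\rho_{n,r}$, Theorem~\ref{thm:dim-Alfeld} then gives $s\in\Pi_d$. That slicing step is the whole content of the inductive argument and replaces the ``promotion from the boundary'' you were worried about.
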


\begin{proof}
First let $r \le d \le \rho_{k,r}$.
We will show that $S_d^r(\Delta_{k,n})$ is degenerate.
The proof of this is similar to that of~\cite[Theorem 3.2]{worsey1987ann}
and is by induction on $n \ge k$.
Consider first $n=k$. Since $\Delta_{k,k}$ is a $k$-dimensional
Alfeld split it follows from
Lemma~\ref{lem:celldim} that $S_d^r(\Delta_{k,k})$ is degenerate.
Now suppose $n > k$ and let $s \in S_d^r(\Delta_{k,n})$.
Let $F$ be one of the $(n-1)$-faces of $T$.
Let $w$ be the point in the relative interior of~$F$
used to make the $(n-1)$-dimensional
split $\Delta_{k,n-1}(F)$ of $F$ in the construction of $\Delta_{k,n}$.
For each $\lambda \in (0,1]$, let
$F_\lambda$ be the $(n-1)$-simplex
$$ F_\lambda = \{(1-\lambda)v + \lambda x: x \in F \}, $$
which is parallel to $F$ and passes through the point
$$ p = (1-\lambda) v + \lambda w. $$
The split $\Delta_{k,n-1}(F)$ induces an analogous split
$\Delta_{k,n-1}(F_\lambda)$.
By the induction hypothesis,
$S_d^r(\Delta_{k,n-1}(F_\lambda))$ is degenerate and
so all the pieces of $s$ meeting at $[v,w]$
have common derivatives 
within $F_\lambda$ up to order $d$ at $p$.
Since all these pieces join continuously along
$[v,w]$, they also have common derivatives
along $[v,w]$.
Therefore all these pieces are the same polynomial and
thus $s$ belongs to $S_d^r(\Delta_{A,n})$.
Since $d \le \rho_{k,r} \le \rho_{n,r}$, it follows,
as in the proof of Theorem~\ref{thm:dim-Alfeld}, that $s \in \Pi_d$.

This proves the lower bound on $\mos S^r(\Delta_{\WF,n})$.
To prove the upper bound we just need to observe that
$\Delta_{k,n}$ is a refinement of an Alfeld split
$\Delta_{A,n}$, which implies that
$$ S_d^r(\Delta_{A,n}) \subset S_d^r(\Delta_{k,n}) $$
for any $0 \le r \le d$.
Thus if $S_d^r(\Delta_{A,n})$ is non-degenerate,
so is $S_d^r(\Delta_{k,n})$.
\end{proof}

\subsection{The $\Delta_{n-1,n}$ split}
Consider the special case of the $\Delta_{n-1,n}$ split,
which has $n(n+1)$ subsimplices.
It can be constructed by
first making an Alfeld split $\Delta_{A,n}$ ($=\Delta_{n,n}$)
of an $n$-simplex $T$ using some interior point $v$.
We then choose an interior point of each boundary face $F$
(an $(n-1)$-simplex) of $T$ and use it to split $F$ into
$n$ subsimplices and then connect them to~$v$.

Let us say that $\Delta_{n-1,n}$ is \emph{aligned}
if, for every face $F$, the splitting point chosen for $F$ is the unique
point in $F$ that is collinear with $v$ and the
vertex of $T$ opposite~$F$.
This is what
Schenck and Sorokina \cite{schenck2018subdivision}
called a facet split.

\begin{theorem}\label{thm:dim-facet}
The maximal order of supersmoothness of an aligned split
$\Delta_{n-1,n}$ is
$$ \mos S^r(\Delta_{n-1,n}) = \rho_{n-1,r}. $$
\end{theorem}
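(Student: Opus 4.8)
The plan is to prove the two inequalities separately; the lower bound is essentially already available. An aligned $\Delta_{n-1,n}$ is a particular instance of a $\Delta_{k,n}$ split with $k=n-1$, so for $n\ge 3$ Theorem~\ref{thm:mos-WF} gives $\rho_{n-1,r}\le\mos S^r(\Delta_{n-1,n})$ at once, while for $n=2$ the split is the Powell--Sabin $6$-split with only three distinct interior-edge slopes, so Theorem~\ref{thm:one} already gives $\mos S^r(\Delta_{1,2})=r=\rho_{1,r}$. Hence the remaining task is the upper bound $\mos S^r(\Delta_{n-1,n})\le\rho_{n-1,r}$ for $n\ge 3$, which by Corollary~\ref{cor:mos} is equivalent to showing that $S_d^r(\Delta_{n-1,n})$ is non-degenerate for $d=\rho_{n-1,r}+1$, i.e.\ to exhibiting one spline in $S_d^r(\Delta_{n-1,n})$ that is not a polynomial.

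First I would fix coordinates with $v$ at the origin and vertices $u_0,\dots,u_n$ of $T$, so that $0=\sum_k c_k u_k$ with all $c_k>0$. The alignment hypothesis forces the splitting point $w_l$ of the facet opposite $u_l$ to be a \emph{positive} multiple of $-u_l$; consequently the $n(n+1)$ sub-simplices of $\Delta_{n-1,n}$ are the sets $\mathrm{conv}\bigl(0,\,w_l,\,\{u_k:k\ne l,j'\}\bigr)$ over ordered pairs $l\ne j'$, each sitting inside the simplicial cone $C_{l,j'}:=\mathrm{cone}\bigl(-u_l,\{u_k:k\ne l,j'\}\bigr)$ at $v$. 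I would also note the elementary fact that every interior $(n-1)$-face of the cell contains $v$ (in a cell two distinct $n$-simplices sharing a facet must share it through the common vertex, or they coincide), so that $C^r$-continuity of a polynomial spline on $\Delta_{n-1,n}$ amounts to $C^r$-continuity across each of the hyperplanes carrying the walls of the fan $\Sigma:=\{C_{l,j'}\}$.

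The heart of the proof is a pullback construction from dimension $n-1$. Let $\pi:\RR^n\to\RR^{n-1}$ be a linear surjection with $\ker\pi=\RR u_0$ and put $\bar u_k:=\pi(u_k)$, $k=1,\dots,n$; then $\bar u_0:=\pi(u_0)=0$, $\sum_{k\ge 1}c_k\bar u_k=0$, and any $n-1$ of $\bar u_1,\dots,\bar u_n$ are independent, so $\bar\Delta:=\{\bar\tau_j:=\mathrm{cone}(\{\bar u_k:k\ne j\}):j=1,\dots,n\}$ is a complete simplicial fan in $\RR^{n-1}$ -- precisely the fan of cones at the centre of an Alfeld split $\Delta_{A,n-1}$. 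The key lemma I need is that $\pi$ maps each maximal cone of $\Sigma$ \emph{into a single} maximal cone of $\bar\Delta$: writing $-\bar u_l=\sum_{k\ne l}(c_k/c_l)\bar u_k$ and using $\bar u_0=0$, a short positivity check shows $\pi(C_{l,j'})\subseteq\bar\tau_l$ when $l\ne 0$ and $\pi(C_{0,j'})\subseteq\bar\tau_{j'}$. Granting this, I would pick a non-polynomial $\tilde s\in S_d^r(\Delta_{A,n-1})$ -- one exists because $d=\rho_{n-1,r}+1>\rho_{n-1,r}=\mos S^r(\Delta_{A,n-1})$ by Theorem~\ref{thm:dim-Alfeld} and Corollary~\ref{cor:mos}, with $n-1\ge 2$ -- extend each of its polynomial pieces to all of $\RR^{n-1}$ to obtain polynomials $\tilde p_j\in\Pi_d$ (so that $\tilde p_j$ and $\tilde p_{j''}$ agree to order $r$ across the common wall-hyperplane of $\bar\tau_j,\bar\tau_{j''}$), and then define the candidate $s$ on $\Delta_{n-1,n}$ by setting its piece on $\mathrm{conv}(0,w_l,\{u_k:k\ne l,j'\})$ to be $\tilde p_{m(l,j')}\circ\pi$, where $m(l,j')=l$ if $l\ne 0$ and $m(0,j')=j'$. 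Each piece is a polynomial of degree $\le d$; across any wall of $\Sigma$ the two $m$-labels either agree -- in which case the two pieces are literally the same polynomial -- or differ, and then the key lemma plus a little index bookkeeping places the wall inside $\pi^{-1}$ of the common wall-hyperplane of the two corresponding cones of $\bar\Delta$, on which $\tilde p_a-\tilde p_b$ vanishes to order $r$; hence $(\tilde p_a-\tilde p_b)\circ\pi$ vanishes to order $r$ there and $s$ is $C^r$ across the wall. So $s\in S_d^r(\Delta_{n-1,n})$; and $s$ is not a polynomial, because $\pi$ is surjective and the labels $m(l,j')$ exhaust $\{1,\dots,n\}$, so $s$ being polynomial would force all the $\tilde p_j$ to coincide, contradicting the choice of $\tilde s$. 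This gives the upper bound and finishes the proof.

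The step I expect to be the real obstacle is the key lemma that $\pi(C_{l,j'})$ always lands in one cone of $\bar\Delta$: this is exactly where the alignment hypothesis enters (for a generic facet-point the analogous projection would not collapse the sub-simplices consistently), and beyond the three-case positivity check one must also verify, in the ``different-label'' case, that the two cones of $\bar\Delta$ involved are genuinely adjacent and that the $\RR^n$-wall lies in $\pi^{-1}$ of their shared wall-hyperplane -- a short but slightly delicate piece of combinatorics. The rest -- existence of the lower-dimensional non-polynomial Alfeld spline, transport of $C^r$-smoothness under composition with the submersion $\pi$, and non-polynomiality of the pullback -- is routine.
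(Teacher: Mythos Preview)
Your argument is correct and takes a genuinely different route from the paper. The paper simply quotes the Schenck--Sorokina dimension formula
\[
\dim S_d^r(\Delta_{n-1,n}) = \dim \Pi_d + A(n,d,r) + (n+1)P(n,d,r)
\]
for the aligned split, observes that $A(n,d,r)=0$ whenever $P(n,d,r)=0$, and reads off the largest $d$ with $P(n,d,r)=0$ via Corollary~\ref{cor:mos}; the whole proof is a two-line dimension count that outsources the hard work to~\cite{schenck2018subdivision}.

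Your proof, by contrast, is self-contained within the paper: the lower bound is recycled from Theorem~\ref{thm:mos-WF} (and Theorem~\ref{thm:one} when $n=2$), and the upper bound is obtained by an explicit construction---pulling back a non-polynomial spline from $S_d^r(\Delta_{A,n-1})$ along a linear projection $\pi$ whose kernel is the line through $v$ and one vertex $u_0$. The alignment hypothesis is precisely what guarantees that each cone $C_{l,j'}$ of $\Delta_{n-1,n}$ projects into a single cone of the $(n-1)$-dimensional Alfeld fan (your three-case positivity check is correct, and the verification that in the ``different-label'' case the wall hyperplane equals $\pi^{-1}$ of the corresponding Alfeld wall hyperplane goes through once one notes that $u_0$ lies in every such wall). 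The only external input you need is the Alfeld dimension formula already used for Theorem~\ref{thm:dim-Alfeld}.

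What each approach buys: the paper's proof is much shorter and yields the full dimension formula as a by-product, but depends on a non-trivial external result. Your approach is longer and requires careful index bookkeeping across the two types of interior facets (within an Alfeld piece vs.\ across Alfeld pieces), but it avoids~\cite{schenck2018subdivision} entirely and makes transparent \emph{why} alignment matters---without it the cones $C_{l,j'}$ would not project consistently and the pullback spline would fail to be $C^r$.
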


\begin{proof}
The dimensions of the polynomial spline spaces for
an aligned split $\Delta_{n-1,n}$
were derived by Schenck and Sorokina~\cite{schenck2018subdivision}.
For $0 \le r \le d$,
\begin{align}\label{dim:F}
\dim S_d^r(\Delta_{n-1,n}) = \dim \Pi_d + A(n, d, r) + (n+1)P(n, d, r),
\end{align}
where $A(n, d, r)$ is as in Theorem~\ref{thm:dim-Alfeld} and 
$$
P(n, d, r)=\begin{cases}
(n-1)\binom{d+n-(r+1)n/2}{n}, \quad \mbox{if $r$ is odd};\\
\binom{d+n-1-rn/2}{n}+\cdots +\binom{d+1-rn/2}{n},
  \quad \mbox{if $r$ is even}.
\end{cases}
$$
Therefore, $S_d^r(\Delta_{n-1,n})$ is degenerate if and only if
$A(n, d, r) + (n+1)P(n, d, r) = 0$. Since
$A(n, d, r) = 0$ when $P(n, d, r) = 0$, this is equivalent to
the condition that $P(n, d, r) = 0$, which holds when
$$
\begin{cases}
{d-(r+1)n/2\leq -1}, \quad \mbox{if $r$ is odd};\\
{d-1-rn/2 \leq -1}, \quad \mbox{if $r$ is even}.
\end{cases}$$
The largest possible $d$ in both cases gives the result
by Corollary \ref{cor:mos}.
\end{proof}

It is remarked in~\cite[Remark 4.3]{schenck2018subdivision} that for $r=1$,
the dimension formula \eqref{dim:F} also holds even without the
collinearity condition, from which we conclude that
for an arbitrary $\Delta_{n-1,n}$ split,
$$ \mos S^1(\Delta_{n-1,n}) = \rho_{n-1,1} = n-1. $$
For example, in 3D, for an arbitrary
Worsey-Farin split $\Delta_{2,3}$ we have
$$ \mos S^1(\Delta_{2,3}) = 2. $$

\subsection{$2$-cells}
Finally, we consider a slightly different kind of cell,
constructed as follows.
Let $T$ be an $n$-dimensional simplex and choose an interior point $v$
of $T$ and connect it to just one $(n-1)$-face of $T$, forming
a simplex $T_1$ contained in $T$.
We now let $T_2$ be the the closure of $T \setminus T_1$.
The two elements $T_1$ and $T_2$ form what we will call a 2-cell,
$\Delta_2 = \{T_1,T_2\}$.
Of course it is not a cell of simplices because $T_2$ is not
a simplex.
Figure~\ref{fig:2-cell} shows a 2-cell in $2D$.
\begin{figure}[ht]
\centering
\includegraphics[height=0.14\textheight]{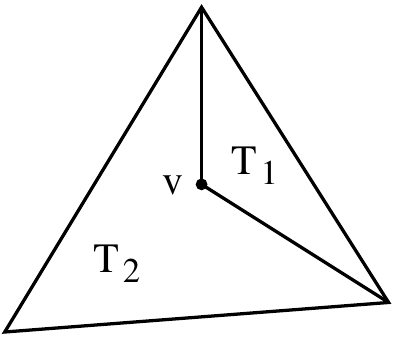}
\caption{A 2-cell in 2D.}
\label{fig:2-cell}
\end{figure}

Now we can consider the supersmoothness of splines
in $S^r(\Delta_2)$.
Even though 2-cells do not occur in
simplicial meshes,
the local configuration of edges emanating from $v$
could occur in a polytopal mesh if we allowed non-convex polytopes.
Shekhtman and Sorokina \cite{shekhtman2015intrinsic} studied
this kind of configuration in 2D and showed
that the order of supersmoothness is at least $r+1$
for any $r \ge 0$
(supersmoothness is `true' supersmoothness in this case,
not just the matching of derivatives).
We can now extend this result using our characterization.
Even though a 2-cell contains the non-simplicial element $T_2$,
the intersection of $T_1$ and $T_2$ is the union
of $n$ faces (of dimension $(n-1)$) and so our characterization
of supersmoothness at $v$ also holds for a $2$-cell,
i.e., we can apply Theorem~\ref{thm:charac}
and Corollary~\ref{cor:mos} to a $2$-cell $\Delta_2$.
To use these results we need the dimensions of
the spline spaces $S_d^r(\Delta_2)$, $0 \le r \le d$.

\begin{lemma}\label{lem:2-cell}
For any $0 \le r \le d$,
$$ \dim S_d^r(\Delta_2) = \dim \Pi_d + \dim \Pi_{d - n(r+1)}. $$
\end{lemma}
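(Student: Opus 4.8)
The plan is to compute $\dim S_d^r(\Delta_2)$ by viewing a spline on $\Delta_2$ as a pair $(p,q)$ with $p=s|_{T_1}\in\Pi_d$ and $q=s|_{T_2}\in\Pi_d$, subject only to the $C^r$ matching conditions across the common boundary $T_1\cap T_2$. The key geometric observation, already noted in the text, is that $T_1\cap T_2$ is the union of $n$ faces of dimension $n-1$, all of them sharing the vertex $v$; more importantly, these $n$ faces are exactly the $n$ facets of the simplex $T_1$ that contain $v$, and together they span a full-dimensional neighbourhood of $v$ inside $T$. Equivalently, one can place $v$ at the origin and take the edges $[v,u_1],\dots,[v,u_n]$ of $T_1$ emanating from $v$ as coordinate directions; the $n$ facets of $T_1$ through $v$ are then the $n$ coordinate hyperplanes $\{x_i=0\}$, intersected with a neighbourhood of $v$.

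With that normalization the difference $\delta:=q-p$ is a polynomial in $\Pi_d$, and the $C^r$ matching condition on each of the $n$ coordinate hyperplanes says precisely that $D^\alpha\delta$ vanishes on $\{x_i=0\}$ for all $|\alpha|\le r$, for each $i=1,\dots,n$. A standard divisibility argument shows this forces $x_i^{\,r+1}\mid\delta$ for each $i$: vanishing of $\delta$ together with all partials up to order $r$ on $\{x_i=0\}$ is equivalent to $\partial^j\delta/\partial x_i^j$ vanishing on $\{x_i=0\}$ for $j\le r$ (the transverse derivatives control the normal ones once $\delta$ and its lower-order normal derivatives vanish there), and by Taylor expansion in $x_i$ this is exactly $x_i^{r+1}\mid\delta$. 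Since the $x_i$ are distinct irreducible factors, $\prod_{i=1}^n x_i^{\,r+1}$ divides $\delta$, i.e. $\delta = \bigl(\prod_{i=1}^n x_i^{\,r+1}\bigr)\,h$ with $h$ a polynomial of degree at most $d-n(r+1)$ (and $\delta=0$ if $d<n(r+1)$). Conversely any such $\delta$ visibly satisfies all the matching conditions, and then $(p,q)=(p,p+\delta)$ is a genuine element of $S_d^r(\Delta_2)$ for arbitrary $p\in\Pi_d$.

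Thus the map $S_d^r(\Delta_2)\to\Pi_d\times\Pi_{d-n(r+1)}$, $s\mapsto(p,h)$, is a linear isomorphism, giving
$$ \dim S_d^r(\Delta_2) = \dim\Pi_d + \dim\Pi_{d-n(r+1)}, $$
with the convention $\dim\Pi_k=0$ for $k<0$, which is the claimed formula. The main obstacle is the divisibility step: one must be careful that "$C^r$ matching across a hyperplane" really does translate into divisibility by the $(r+1)$-st power of the linear form, handling correctly the fact that the matching conditions involve all partials $D^\alpha$ with $|\alpha|\le r$, not just the purely normal ones — but since tangential differentiation preserves vanishing on the hyperplane, only the normal derivatives up to order $r$ carry information, and these are precisely what divisibility by $x_i^{r+1}$ encodes. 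The only other point needing a line of justification is that the $n$ linear forms defining the facets of $T_1$ through $v$ are pairwise non-proportional (they are, since $T_1$ is a nondegenerate simplex), so that unique factorization yields divisibility by the product.
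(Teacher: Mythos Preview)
Your proof is correct and follows essentially the same route as the paper's: the paper writes $\dim S_d^r(\Delta_2)=\dim\Pi_d+\dim S_0$ where $S_0$ is the space of splines vanishing on $T_2$, and then identifies $S_0$ with $\Pi_{d-n(r+1)}$ via exactly the divisibility argument you give (your $\delta=q-p$ plays the role of the paper's element of $S_0$). The only cosmetic difference is that you normalize the facet equations to coordinate hyperplanes while the paper keeps general linear forms $l_i$.
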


\begin{proof}
We have
$$ \dim S_d^r(\Delta_2) = \dim \Pi_d + \dim S_0, $$
where
$$ S_0 = \{ s \in S_d^r(\Delta_2): \hbox{$s \equiv 0$ on $T_2$} \}. $$
Letting $F_1,\ldots,F_n$ be the $(n-1)$-dimensional faces
common to $T_1$ and $T_2$, we have
$$ S_0 = \{ p \in \Pi_d: D^\alpha p|_{F_i} = 0, 
                 \, |\alpha| \le r, \, i=1,\ldots, n \}. $$
Let $l_i(x)=0$ be any equation for the face $F_i$, $i=1,\ldots, n$.
Then we can express any $p \in S_0$ uniquely in the form
$$ p(x) = l_1(x)^{r+1} \cdots l_n(x)^{r+1} q(x), \quad x \in T_1, $$
where $q = 0$ if $d-n(r+1) < 0$ and
$q \in \Pi_{d-n(r+1)}$ if $d-n(r+1) \ge 0$.

\end{proof}

\begin{theorem}\label{thm:2-cell}
For $r \ge 0$, $\mos S^r(\Delta_2) = r + (n-1)(r+1)$.
\end{theorem}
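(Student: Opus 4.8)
The plan is to read off the theorem directly from Lemma~\ref{lem:2-cell} and Corollary~\ref{cor:mos}, which (as noted in the text preceding this theorem) applies to the $2$-cell $\Delta_2$ just as it does to a cell of simplices. By Corollary~\ref{cor:mos}, $\mos S^r(\Delta_2)$ is the largest degree $d \ge r$ for which $S_d^r(\Delta_2)$ is degenerate, i.e.\ for which $\dim S_d^r(\Delta_2) = \dim \Pi_d$. So the whole task reduces to solving this one equation in $d$, using the dimension count already established in Lemma~\ref{lem:2-cell}.

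First I would substitute the formula $\dim S_d^r(\Delta_2) = \dim \Pi_d + \dim \Pi_{d-n(r+1)}$ from Lemma~\ref{lem:2-cell}. Degeneracy is therefore equivalent to $\dim \Pi_{d-n(r+1)} = 0$. With the standard convention $\dim \Pi_k = \binom{k+n}{n}$ for $k \ge 0$ and $\dim \Pi_k = 0$ for $k < 0$, this holds precisely when $d - n(r+1) < 0$, that is, when $d \le n(r+1) - 1$. The edge case $d = n(r+1)$ gives the second term $\dim \Pi_0 = 1 > 0$, so the space is non-degenerate there; this is exactly the cut-off.

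Next I would take the largest admissible value, $d = n(r+1) - 1$, and check that it lies in the range over which the maximum in Corollary~\ref{cor:mos} is taken, i.e.\ that $n(r+1) - 1 \ge r$; this is immediate since $n(r+1) - 1 - r = (n-1)(r+1) \ge 0$ for all $n \ge 1$, $r \ge 0$. Hence $\mos S^r(\Delta_2) = n(r+1) - 1$. A one-line rewriting, $n(r+1) - 1 = r + (n-1)(r+1)$, gives the stated form. There is essentially no obstacle here: the genuine content was already carried by the explicit factorization $p = l_1^{r+1}\cdots l_n^{r+1}\,q$ used to prove Lemma~\ref{lem:2-cell}, and the only thing to be careful about is the bookkeeping of the $\dim \Pi_k$ convention for negative $k$.
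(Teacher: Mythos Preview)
Your proof is correct and follows essentially the same route as the paper's: apply Lemma~\ref{lem:2-cell} to see that degeneracy is equivalent to $d - n(r+1) < 0$, then invoke Corollary~\ref{cor:mos} to pick the largest such $d$ and rewrite $n(r+1)-1$ as $r+(n-1)(r+1)$. Your extra checks (the edge case $d=n(r+1)$ and the verification that $n(r+1)-1 \ge r$) are fine but not strictly needed.
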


\begin{proof}
By Lemma~\ref{lem:2-cell},
$S_d^r(\Delta_2)$ is degenerate if and only if
$\dim \Pi_{d - n(r+1)} =0$, or equivalently
that $d - n(r+1) < 0$.
Thus, from Corollary~\ref{cor:mos},
the maximal order of supersmoothness is
$$ d = n(r+1)-1 = r + (n-1)(r+1). $$
\end{proof}

For example, in $\RR^2$, $\mos S^r(\Delta_2) = 2r+1$
and in $\RR^3$, $\mos S^r(\Delta_2) = 3r+2$.

 
%


\bibliography{intrinsic}{}   

\end{document}